\newtheorem{theorem}{Theorem}
\newtheorem{corollary}[theorem]{Corollary}
\newtheorem{lemma}[theorem]{Lemma}
\newtheorem{definition}[theorem]{Definition}
\newtheorem{proposition}[theorem]{Proposition}
\def\K{{\mathbb K}}
\def\k{{\mathbb K}}
\def\F{{\mathcal F}}
\def\kx{\K\langle\!\langle x,y \rangle\!\rangle}
\def\ww{{\rcirclearrowleft}}
\def\ar{{\nearrow}}
\def\deg{{\rm deg}\,}
\def\dim{{\rm dim}\,}
\def\spann{\hbox{ span}\,}
\begin{document}
\def\F{{\mathbb F}}
\title{Classification of contraction algebras and pre-Lie algebras associated to  braces and trusses}
\author{Natalia Iyudu}
\date{}
\maketitle

\bigskip
\centerline{\it Dedicated to the memory of Ernest Borisovich Vinberg}
\bigskip

\begin{abstract}

We develop tools for classification of contraction algebras and apply these to solve the problem on classification up to isomorphism  of 8 and 9 dimensional algebras corresponding to 3-fold flops. We prove that  there is only one up to isomorphism contraction algebra of dimension 8, and two algebras of dimension 9.
The formulae for the dimension of algebra, depending on the type of the potential are obtained.

 In the second part of the paper we show that associated graded structure to brace and truss with appropriate descending ideal filtration is pre-Lie.

\end{abstract}

\small \noindent{\bf MSC:} \ \  16A22, 16S37, 14A22

\section{Introduction}

We consider here
first  classification of 8 and 9 dimensional two generated contraction algebras.
Contraction algebras  were introduced in  Donovan-Wemyss  work on minimal model program and noncommutative resolution of singularities \cite{W, WD}. Namely,
 they serve as noncommutative invariants attached to a birational flopping contraction:
$f: X \to Y$ which  contracts rational curve $C \simeq {\Pr}^1 \subset  X$ to a point, where  $X$ is a smooth quasi-projective 3-fold.
It is a new, essentially noncommutative  invariant of curve contraction, which is not a number, as ones previously known, such  as Gopakumar-Vafa invariants, but instead an associative noncommutative algebra.  It recovers all known invariants in a natural way, and  as it is argued  in \cite{BW}, the contraction algebras are finer invariants of 3-fold flops, than various curve-counting theories, as there are examples where curve-counting invariants are the same, but contraction algebras are not isomorphic. We will see these examples here within the classification of 9 dimensional contraction algebras.
 In \cite{BW},  the question on this classification was raised and it was noted that 'the isomorphism problem is delicate and is in general also difficult'.
Thus we develop here techniques which can serve to solve this problem of classification up to isomorphism. Application of this technique allows us to classify all 8 and 9 dimensional  contraction algebras on 2 generators (corresponding to 3-fold flops). We hope that these classification  results  can help to better understand the ways contraction algebras capture geometric information. In particular, where the refinements, for example in shape of derived contraction algebras \cite{Matt} are needed.

It is known due to result of Van den Bergh \cite{MB} that contraction algebras are potential, that is defining relations are noncommutative partial derivatives of a potential.
In view of the conjecture of Wemyss and Donovan \cite{WD}, (see also \cite{Davison} for evidences for the conjecture), saying that any potential algebra  (of rose quiver, and more generally of a quiver from a certain wider list) can be realised as a contraction algebra, we approach the study of contraction algebras via the study of potential algebras by methods similar to the ones from \cite{ISP, ISCl, Skl1, Skl2}, for example.

In \cite{AN} we answered  question due to M.Wemyss on the minimal dimension for a contraction algebra in two generators, and also found  a conditions on the  potential necessary to produce a finite dimensional  algebra (or an algebra of linear growth)\cite{AN, ISPA}.
It turned out that
the minimal dimension possible for a contraction algebra is 8, and the potential should be nonhomogeneous and contain terms of degree 3, in order algebra to be finite dimensional or of linear growth.

Here we prove the following classification results.

\begin{theorem} There is one up to isomorphism contraction algebra $A=\kx/id(P)$ of dimension 8, and it is defined by the potential  $x^3+y^3+xyxy\ww$. There are two up to isomorphism contraction algebras $A=\kx/id(P)$  of dimension 9, and they are defined by potentials  $x^2y\ww + y^4$ and $x^2y+y^4+y^5$.
\end{theorem}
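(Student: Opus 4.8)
The plan is to translate the problem into the language of potential algebras and solve it by (a) classifying the admissible potentials up to the equivalence induced by automorphisms of $\kx$, and (b) computing, for each candidate potential, the dimension of $A=\kx/\mathrm{id}(P)$ by a standard (Gr\"obner--Shirshov/Anick) basis. Two preliminary reductions are used throughout. First, since we classify algebras genuinely on two generators, $P$ may be assumed to have no constant, linear or quadratic part (a nonzero quadratic part allows one to eliminate a generator), and by the results of \cite{AN} finite dimensionality forces $P_3\neq 0$; so $P=P_3+P_{\ge 4}$ with $P_3$ a nonzero cyclic cubic. Second, because contraction algebras are complete local, $\mathrm{id}(P)$ must be taken to be \emph{closed}, and this is essential: the closure can strictly contain the purely algebraic ideal, since a relation of the shape $y^k(1+\lambda y+\cdots)=0$ with a unit factor forces $y^k=0$ only after completion. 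All dimension computations below accordingly use the local (tangent-cone) version of the Buchberger procedure.

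The first real step classifies the cubic part. The space of cyclic words of degree $3$ in $x,y$ is four dimensional, spanned by $x^3,\ x^2y\ww,\ xy^2\ww,\ y^3$, and over an algebraically closed field of characteristic zero the natural $GL_2$-action on it is isomorphic to the action on binary cubic forms $\mathrm{Sym}^3\K^2$ (the map $x^iy^{3-i}\ww\mapsto\binom{3}{i}x^iy^{3-i}$ is $GL_2$-equivariant). Hence there are exactly three nonzero orbits, with representatives $x^3$, $x^2y\ww$ and $x^3+y^3$; so after a linear change of variables $P_3$ is one of these, and the three cases are treated separately.

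For each cubic type one then determines which higher parts $P_{\ge 4}$ give $\dim A\in\{8,9\}$, the dimension formulae obtained earlier in the paper doing the bookkeeping: they express, or bound below, $\dim A$ in terms of the cubic type together with the first few homogeneous components of $P$, and show that most $P_{\ge 4}$ give either an infinite-dimensional algebra or one of dimension $\ge 10$. Concretely, for $P_3=x^3$ one has $x^2\in\mathfrak{m}^3$ and $\partial_yP\in\mathfrak{m}^3$, and counting the symbol ideal gives $\dim A\ge 1+2+3+3>9$ whenever $A$ is finite dimensional, so this type does not occur; for $P_3=x^3+y^3$ the relations have symbols $x^2,y^2$ and $A$ is a quotient of the zig-zag algebra $\kx/(x^2,y^2)$, and the only $P_{\ge 4}$ reducing it to a finite-dimensional algebra of dimension $\le 9$ reduce, after the automorphisms fixing $x^3+y^3$ and rescaling, to $P=x^3+y^3+xyxy\ww$, for which $\mathrm{gr}A=\kx/(x^2,y^2,xyxy-yxyx)$ with Hilbert series $1+2t+2t^2+2t^3+t^4$, so $\dim A=8$; for $P_3=x^2y\ww$ the relations have symbols $x^2$ and $xy$, $A$ has linear growth before adding higher terms, and the only finite-dimensional cases of dimension $\le 9$ reduce to $x^2y\ww+y^4$ and $x^2y\ww+y^4+y^5$, both of dimension $9$ with $\mathrm{gr}A=\kx/(xy+yx,\ x^2,\ y^3x,\ y^6)$, Hilbert series $1+2t+2t^2+2t^3+t^4+t^5$. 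The technical heart, and the step I expect to be the main obstacle, is the rigidity claim implicit here: one must show that the automorphisms $\varphi$ of $\kx$ with $\varphi(P_3)\equiv P_3$ modulo cyclic equivalence and scalars act on the higher components so that the orbits are represented exactly by the short lists above. This is the delicate bookkeeping with the filtered pieces of $\varphi$; for instance, when $P_3=x^2y\ww$ such a $\varphi$ must send $x\mapsto\alpha x+(\text{h.o.}),\ y\mapsto\beta y+(\text{h.o.})$ with $\alpha^2\beta=1$, and one must compute its effect on the degree-$4$ and degree-$5$ parts to see that, once $y^4$ is normalised, the monomial $y^5$ can be neither created nor removed.

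It remains to check pairwise non-isomorphism. The dimension-$8$ algebra is separated by dimension. The two dimension-$9$ algebras $B_1=\kx/\mathrm{id}(x^2y\ww+y^4)$ and $B_2=\kx/\mathrm{id}(x^2y\ww+y^4+y^5)$ have, as just noted, the \emph{same} associated graded algebra, hence the same $\mathfrak{m}$-adic Hilbert series, so one needs a finer invariant: for example, one shows that in $B_1$ there are generators $x',y'$ of $\mathfrak{m}$ with $(x')^2$ a scalar multiple of $(y')^3$ and $x'y'+y'x'=0$ while $B_2$ admits no such pair, or one compares the truncations $B_i/\mathfrak{m}^5$ in which the class of $x^2+4y^3$ is $0$ for $B_1$ but $-5y^4\neq 0$ for $B_2$; alternatively one applies the potential-equivalence criterion together with the degree-$5$ obstruction isolated in the rigidity step. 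This gives the stated classification: a single algebra in dimension $8$ and exactly two in dimension $9$.
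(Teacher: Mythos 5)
Your overall strategy is the same as the paper's (three cubic normal forms, a dimension bound for each type, normal forms for the higher-degree part, then a non-isomorphism check), but the two steps that carry the real content are left as declared intentions rather than arguments. First, the reduction of the higher part for cubic type $x^2y\ww$: you assert that the finite-dimensional cases of dimension at most $9$ "reduce to" $x^2y\ww+y^4$ and $x^2y\ww+y^4+y^5$, and you yourself flag the rigidity of the $y^5$-term under automorphisms preserving the cubic part as the expected main obstacle -- but that rigidity is precisely what has to be proved, and nothing in the proposal proves it. The paper takes a different and complete route here: Lemma~\ref{zamx2y} brings any potential $x^2y\ww+F_4+\dots$ to $x^2y\ww+y^4p(y)$ by explicit unitriangular substitutions, Proposition~\ref{gen} and Corollary~\ref{1} compute the Gr\"obner basis and the dimension, which force $p$ to give $x^2y\ww+y^4+c_2y^5+c_3y^6$ in dimension $9$ and allow killing $y^6$; no rigidity of the potential is ever needed, because the remaining question ($c=0$ versus $c\neq 0$) is settled directly at the level of the algebras.

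Second, and more seriously, the non-isomorphism of the two $9$-dimensional algebras is not established. As you note, they have the same associated graded algebra and Hilbert series, so a finer argument is required, but your candidate invariants do not yet do the job: "the class of $x^2+4y^3$ in $B_i/\mathfrak{m}^5$" is not an isomorphism invariant, since it depends on the chosen generators, so its nonvanishing in $B_2$ for the given presentation proves nothing; and the reformulation "there exist generators $x',y'$ with $x'y'+y'x'=0$ and $(x')^2$ proportional to $(y')^3$ in $B_1$ but no such pair in $B_2$" is exactly the statement to be proved, i.e.\ a quantification over all generating pairs. The paper's proof is precisely that computation: it writes a putative isomorphism with undetermined coefficients in a common linear basis of the two algebras and derives the contradiction $b_2=0$ together with $b_2\neq 0$. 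Without carrying out such a computation (or a genuine rigidity argument for the potential), the dimension-$9$ half of the theorem remains unproved. A smaller slip: for cubic type $x^3$ your count $1+2+3+3>9$ is arithmetically false (the left side equals $9$), so as stated it does not exclude this type from dimension $9$; the correct estimate, as in the paper, is $\dim A_3/A_4\geq 4$, giving $\dim A\geq 10$.
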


We start the proof by showing that invertible linear changes of variables can bring a cubic potential to one of the three forms: with the cubic part $x^2y\ww$, or $x^3$ or $x^3+y^3$. We show that 8-dimensional algebras can appear only from the potential with cubic term $x^2y\ww$,
and 9-dimensional algebras  only from the potential with cubic term $x^3+y^3$.
We also found the formulae for the dimension of the contraction algebra. In case of the potential with cubic term $x^2y\ww$ combination of Lemma~\ref{zamx2y}, Proposition~\ref{gen} and Corollary~\ref{1} provides the following dimension counting result.

\begin{theorem} Let $A=\kx /id(F)$, be a finite dimensional algebra with the potential $F=x^2y\ww+F_4+...+F_r$, then an invertible changes of variables can bring the potential $F$ to $x^2y+y^4p(y)$, and the dimension of $A$ is:
$\dim A =3(2n+3)$,  if $k=2n,$ $ \dim A= 4n+k+9$, if $k<2n$, ($k$ is odd). Here $k$ is degree of polynomial $p(y)$, and $2n$ is degree of its even part.
\end{theorem}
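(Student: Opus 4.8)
The plan is to route the argument through the three cited ingredients: first normalise the potential, then extract a small set of relations and complete them to a Gr\"obner basis, and finally count the surviving monomials with attention to the parity of $p$.

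\textbf{Step 1 (normalisation).} By Lemma~\ref{zamx2y} an invertible change of variables brings $F=x^2y\ww+F_4+\dots+F_r$ to the reduced form $F=x^2y+y^4p(y)$, so I take this as the starting point. Computing the cyclic partial derivatives, and rescaling, the defining relations of $A=\kx/\mathrm{id}(F)$ become $\partial_xF\sim xy+yx$ and $\partial_yF\sim x^2+q(y)$, where $q(y):=(y^4p(y))'$ vanishes to order $\ge 3$. Hence $A=\kx/\bigl(xy+yx,\ x^2+q(y)\bigr)$.

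\textbf{Step 2 (Gr\"obner basis and module picture).} Order monomials so that the rewriting rules are $xy\to -yx$ and $x^2\to -q(y)$; together with the identity $xy^j=(-1)^jy^jx$ these already reduce every word to the shape $y^a$ or $y^ax$, so $A=\K[[y]]\cdot 1+\K[[y]]\cdot x$ as a left $\K[[y]]$-module. The only nontrivial ambiguity is the self-overlap $x\cdot x^2=x^2\cdot x$: reducing it in the two ways produces the new relation $q_o(y)\,x=0$, where $q_o$ is the odd part of $q$; multiplying this on the right by $x$ and substituting $x^2=-q(y)$ yields a purely-$y$ relation $q_o(y)q(y)=0$. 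Proposition~\ref{gen} is precisely the assertion that once $q_o(y)x$ and $q_o(y)q(y)$ are adjoined all further overlaps resolve, so that $\{xy+yx,\ x^2+q(y),\ q_o(y)x,\ q_o(y)q(y)\}$ is a Gr\"obner basis and
\[
A\;\cong\;\K[[y]]/(q_o(y)q(y))\ \oplus\ \bigl(\K[[y]]/(q_o(y))\bigr)\,x
\]
as $\K$-vector spaces. In particular $\dim A=\mathrm{ord}(q_oq)+\mathrm{ord}(q_o)$, where $\mathrm{ord}$ denotes the lowest degree of a nonzero power series; finite dimensionality of $A$ forces $q_o\neq 0$, i.e.\ the even part $p_e$ of $p$ is nonzero, which matches the finiteness criterion of \cite{AN}.

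\textbf{Step 3 (the parity count).} Since $\K[[y]]$ is a domain, $\mathrm{ord}(q_oq)=\mathrm{ord}(q_o)+\mathrm{ord}(q)$; from $q=(y^4p)'$ and $q_o=(y^4p_e)'$ one gets $\mathrm{ord}(q)=\mathrm{ord}(p)+3$ and $\mathrm{ord}(q_o)=\mathrm{ord}(p_e)+3$. In the reduced form of Lemma~\ref{zamx2y} the even part $p_e$ is the monomial $y^{2n}$, so $\mathrm{ord}(q_o)=2n+3$ and $\dim A=2(2n+3)+\mathrm{ord}(p)$. If $p$ has no odd part (the case $k=2n$) then $\mathrm{ord}(p)=2n$ and $\dim A=3(2n+3)$; if the odd part of $p$ is present and of degree $k<2n$, then $\mathrm{ord}(p)=k$ and $\dim A=4n+k+9$. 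Packaging these two evaluations is exactly the statement of Corollary~\ref{1}, which finishes the proof.

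\textbf{Main obstacle.} The delicate point is Step~2: one must verify that no further overlap relations arise and, equivalently, that $y$ is nilpotent of index \emph{exactly} $\mathrm{ord}(q_oq)$ --- a smaller power of $y$ vanishing would shrink the algebra below the claimed dimension. This is where the (routine but unavoidable) computation of Proposition~\ref{gen} is spent: checking that the overlaps of $q_o(y)x$ and $q_o(y)q(y)$ with the two original rules reduce to zero. The remaining subtlety, and the source of the two-case answer, is the bookkeeping of $q$ versus its odd part $q_o$, which is exactly governed by whether the odd part of $p$ is absorbed into the reduced form ($k=2n$) or survives with degree $k<2n$.
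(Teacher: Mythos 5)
Your argument is essentially the paper's own: normalise via Lemma~\ref{zamx2y}, pass to the relations $xy+yx$ and $x^2+q(y)$, resolve the one nontrivial overlap $x^3$ to get $q_o(y)x=0$ and then $q_o(y)q(y)=0$, and count the normal forms $y^a$, $y^ax$ --- this is precisely the content of Proposition~\ref{gen} and Corollary~\ref{1}, just packaged as a $\K[[y]]$-module decomposition. One arithmetic slip: since $\mathrm{ord}(q)=\mathrm{ord}(p)+3$, your intermediate formula should read $\dim A=2(2n+3)+\mathrm{ord}(p)+3$ rather than $2(2n+3)+\mathrm{ord}(p)$; the two final evaluations $3(2n+3)$ and $4n+k+9$ are correct and tacitly use the corrected version (likewise, Lemma~\ref{zamx2y} does not make $p_e$ a single monomial, but you only use $\mathrm{ord}(p_e)=2n$, so that remark is harmless).
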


The second part of the paper is dedicated to the connections between pre-Lie algebra structures with braces and trusses. There were noticed a numerous connections between braces and pre-Lie algebras (\cite{AG, Rump, Ag}) inspired by the connections between Lie algebras and groups. We focus here on  the fact that whenever the brace is endowed with decreasing ideal filtration with zero intersection, the associated graded structure is a pre-Lie algebra.

 {\it Pre-Lie structures} are prominent by their numerous appearances in different areas.
They were introduced by Vinberg under the name {\it left-symmetric algebras} {(LSA)}  in  his study of convex homogeneous cones \cite{Vin}, and a graded version of pre-Lie algebras  appeared at the same time in the work of Gerstenhaber \cite{Gerst} as a structure on the Hochschild complex of an associative algebra. In fact this structure is present in rooted trees with drafting operation and can be traced back to the work of A.Cayley \cite{Cayley}.

{\bf Definition}. Pre-Lie algebra is a structure $A=(A, \cdot)$ with one binary (nonassociative) operation $A\cdot A \to A$, satisfying the left symmetricity
of the associator: $(a,b,c)=(b,a,c)$, where $(a,b,c)=(a\cdot b) \cdot c - a\cdot (b \cdot c)$.

For more information on this structure see for example \cite{Burde, M, MP}.

{\it Braces} were introduced by Rump \cite{rump} to describe all involutive set-theoretic solutions of the Yang-Baxter equation.

 \begin{definition} A set $(A, *, +) $ with two binary operations is called a brace, if $(A,+)$ is an abelian group,  $(A,\circ)$ is a group and
the following mix of associativity and distributivity axioms holds:
$$(a*b+a+b)*c=a*c+b*c+a*(b*c).$$
Here we denote $a\circ b =a+b+a*b.$
\end{definition}

 We consider also the notion of {\it truss}, which generalise braces. It was introduced by T.Brzezinski \cite{Brz} to incorporate the notion of associative ring and the brace.

 \begin{definition} A set $(A, \circ, +) $ with two binary operations is called a truss, if $(A,+)$ is an abelian group,  $(A,\circ)$ is a semigroup and
$$a\circ(b+c)+a=a\circ b+a\circ c +\alpha(a)$$
where $\alpha$ is some function $\alpha: A \to A.$
\end{definition}

In this second part of the paper we prove the following.

\begin{theorem}\label{br}
 Let $A$ be an $\k$-brace,   endowed with a descending ideal filtration  with zero intersection, then
 the associated graded structure of a brace  $A$ is a pre-Lie algebra over $\mathbb F$.

\end{theorem}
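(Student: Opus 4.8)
The plan is to put the pre-Lie operation on $\mathrm{gr}\,A=\bigoplus_{n\ge1}A_n/A_{n+1}$ equal to the one induced by the brace multiplication $*$, and to derive the left-symmetry of its associator from the brace identity by working one step deeper in the filtration. Normalize the filtration as $A=A_1\supseteq A_2\supseteq\cdots$ and write $\nu(z)=\sup\{n:z\in A_n\}$, so that, by the zero-intersection hypothesis, $\nu(z)=\infty$ iff $z=0$, and $\nu(z)\ge1$ always. Use that an ideal filtration is multiplicative, $A_i*A_j\subseteq A_{i+j}$, hence $\nu(z*w)\ge\nu(z)+\nu(w)$ and $\nu(z+w)\ge\min(\nu(z),\nu(w))$; and use freely the standard identities of a brace \cite{rump}: $a*(b+c)=a*b+a*c$, $g*h=\lambda_g(h)-h$ with $\lambda_g(h):=g*h+h$, each $\lambda_g$ additive, and $\lambda\colon(A,\circ)\to\mathrm{Aut}(A,+)$ a group homomorphism. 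For homogeneous $\bar a\in A_i/A_{i+1}$, $\bar b\in A_j/A_{j+1}$, $\bar c\in A_k/A_{k+1}$ put $\bar a\cdot\bar b:=\overline{a*b}$; granting well-definedness for the moment, the defining identity $(\bar a,\bar b,\bar c)=(\bar b,\bar a,\bar c)$ of a pre-Lie algebra (for homogeneous elements, whence for all by bilinearity) is precisely the assertion
$$E:=(a*b)*c-a*(b*c)-(b*a)*c+b*(a*c)\in A_{i+j+k+1}.$$

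First I reduce $E$ with the brace axiom. From $(a*b+a+b)*c=a*c+b*c+a*(b*c)$ and its twin with $a,b$ swapped one gets $a*(b*c)=(a\circ b)*c-a*c-b*c$ and $b*(a*c)=(b\circ a)*c-b*c-a*c$, so after cancellation $E=(a*b)*c-(a\circ b)*c-(b*a)*c+(b\circ a)*c$. Now $a\circ b=(a*b)+(a+b)$ and $b\circ a=(b*a)+(a+b)$ in $(A,+)$, so, writing $\delta(p,q,r):=(p+q)*r-p*r-q*r$ for the defect of additivity of $*$ in its first slot, the terms $(a+b)*c$ cancel and
$$E=\delta(b*a,\,a+b,\,c)-\delta(a*b,\,a+b,\,c).$$
Since $\nu(a*b),\nu(b*a)\ge i+j$ and $\nu(a+b)\ge\min(i,j)\ge1$, the inclusion $E\in A_{i+j+k+1}$ follows at once from the heart of the argument, \textbf{the Additivity Lemma:} $\delta(p,q,r)\in A_{\nu(p)+\nu(q)+\nu(r)}$ for all $p,q,r\in A$.

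To prove it, rewriting the brace axiom via $p\circ q=(p+q)+(p*q)$ gives the recursion $\delta(p,q,r)=p*(q*r)-(p*q)*r-\delta(p+q,\,p*q,\,r)$. Put $m:=\nu(p)+\nu(q)+\nu(r)$, $(p_0,q_0):=(p,q)$ and $(p_{l+1},q_{l+1}):=(p_l+q_l,\,p_l*q_l)$; as $\nu(p_l)+\nu(q_l)$ is nondecreasing in $l$, both $p_l*(q_l*r)$ and $(p_l*q_l)*r$ lie in $A_m$, so iterating yields
$$\delta(p,q,r)\equiv(-1)^L\,\delta(p_L,q_L,r)\pmod{A_m}$$
for every $L\ge1$. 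Since $\nu(p_l)\ge1$ we have $\nu(q_{l+1})\ge\nu(p_l)+\nu(q_l)>\nu(q_l)$, so $\nu(q_L)\to\infty$; it remains to see $\nu(\delta(p_L,q_L,r))\to\infty$, for then, taking $L$ with $\nu(\delta(p_L,q_L,r))\ge m$, the Lemma follows. Here $\nu(q_L*r)\ge\nu(q_L)+\nu(r)\to\infty$, and $(p_L+q_L)*r-p_L*r$ is handled by a \textbf{continuity property:} if $z-z'\in A_M$ then $z*r-z'*r\in A_{M+\nu(r)}$. Indeed, with $w:=\lambda_{z'}^{-1}(z-z')$ one has $z'\circ w=z$, and $z-z'=\lambda_{z'}(w)=w+z'*w$ with $\nu(z'*w)>\nu(w)$ forces $\nu(w)=\nu(z-z')\ge M$; then, from $\lambda_z=\lambda_{z'}\lambda_w$, $z*r=\lambda_z(r)-r=\lambda_{z'}(r+w*r)-r=z'*r+\lambda_{z'}(w*r)$, and $\nu(\lambda_{z'}(w*r))=\nu(w*r+z'*(w*r))\ge\nu(w*r)\ge M+\nu(r)$. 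Applying this with $z=p_L+q_L$, $z'=p_L$, $M=\nu(q_L)$ gives $\nu((p_L+q_L)*r-p_L*r)\ge\nu(q_L)+\nu(r)$, so $\nu(\delta(p_L,q_L,r))\to\infty$, and the Lemma is proved. Well-definedness of $\cdot$ in the first variable is the case $q\in A_{i+1}$ of the Lemma, in the second variable it is left distributivity, and $\k$-linearity is entirely analogous, so $\mathrm{gr}\,A$ is a graded pre-Lie algebra over $\F$.

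The genuine obstacle is exactly the Additivity Lemma. The naive recursion coming from the brace axiom does not by itself land in $A_m$, because at each step the first argument $p_l$ need not gain filtration degree, so the recursion does not terminate on its own. What makes it close is the continuity of $z\mapsto z*r$ in the first variable — which rests on the homomorphism property $\lambda_{g\circ h}=\lambda_g\lambda_h$ and on $g*h=\lambda_g(h)-h$ having filtration degree at least $\nu(g)+\nu(h)$ — together with the zero-intersection hypothesis, which lets one discard the arbitrarily deep tail $\delta(p_L,q_L,r)$. One should of course also verify that what is used of ``ideal filtration'' — multiplicativity $A_i*A_j\subseteq A_{i+j}$ and the $\lambda$-invariance/normality of each $A_n$ — is indeed guaranteed by the definition, and that left distributivity of $*$ over $+$ is available (it is a standard consequence of the brace axioms).
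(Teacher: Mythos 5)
Your proof is correct and follows the same overall strategy as the paper: both reduce the left-symmetry of the associator on the associated graded to an approximate right-distributivity of $*$, obtained by iterating the brace axiom, and both then get the pre-Lie identity by writing $(a\circ b)*c$ and $(b\circ a)*c$ in two ways and subtracting. The difference is in how the key estimate is established. The paper imports the explicit infinite-series formula for the right-distributivity defect from Lemma 15 of \cite{Engel} (proved there for nilpotent braces) and extends it to an arbitrary descending ideal filtration with zero intersection by applying that lemma in each nilpotent quotient $B/B_n$ (Lemma~\ref{di}), then reads off the degree estimate as a corollary. You instead prove the estimate $\delta(p,q,r)\in A_{\nu(p)+\nu(q)+\nu(r)}$ from scratch, by iterating the recursion $\delta(p,q,r)=p*(q*r)-(p*q)*r-\delta(p+q,\,p*q,\,r)$ and killing the tail $\delta(p_L,q_L,r)$ via the continuity estimate ($z-z'\in A_M$ implies $z*r-z'*r\in A_{M+\nu(r)}$), which you derive from the homomorphism property of $\lambda$. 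This makes your argument self-contained and, in particular, supplies explicitly the convergence argument that the paper delegates to the quotient trick, at the cost of essentially re-deriving the content of the cited lemma; the paper's route is shorter but leans on the external result and on the filtration components being ideals so that each $B/B_n$ is itself a (nilpotent) brace. Your closing caveats (that $A_i*A_j\subseteq A_{i+j}$ and left distributivity must be part of the hypotheses) are consistent with the paper, whose working statement of the theorem assumes exactly these properties of the filtration.
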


 We notice also that the result of lemma 15 \cite{Engel} extends from the nilpotent case to the case of arbitrary descending ideal filtration with zero intersection, and spell out the right distributivity formula, which holds in a completion $\widehat A$ of $A$ in the topology defined by the filtration (see Lemma~\ref{di} in Section~\ref{br}).


It turns out that the
analogous results to Theorem~\ref{br} holds for trusses under a mild condition on filtration (Corollary~\ref{Ctr}, Section~\ref{tr}).

\begin{corollary}\label{Ctr} Let $B$ be a truss endowed with
 descending filtration $B=\bigcup B_i$ where $B_i \triangleleft  B$, $ B_{i+1} \subset B_{i}, \, B_i * B_j \subset B_{i+j}$, satisfying  $\bigcap\limits_{i=1}^{\infty} B_i =0$,  and such that $\deg \alpha(a) > 2$. Then the associated graded structure
$B_{gr}= \oplus B_i/B_{i+1} =\oplus \bar B_i$  is a pre-Lie algebra.
\end{corollary}

\section{Asknowledgements}
I would like to thank Michael Wemyss for enlightening  conversations and for asking very interesting questions on contraction algebras, I am also very thankful to  Agata Smoktunowicz for introducing me to braces and  communicating a series of inspiring questions.
I am grateful to MPIM and IHES for support and excellent research atmosphere.

\section{Preliminary facts}

Let $\K\langle x,y\rangle$ be a free noncommutative algebra on free generators $x,y$, and $\kx$ be a free algebra of formal power series on $x,y$.
Let A be the quotient of the formal power series in two variables by an ideal generated by relations $R$:
$A=\kx/ id_{\kx} (R)$, and $B=\k\langle x,y \rangle /(R)$, be the quotient of the free associative algebra by the ideal generated by the same relations, but in $\k\langle x,y \rangle$.

Let us remind the notion of {\it completion} of an algebra $B$.
We assign to variables $x$ and $y$ degree 1, and say that polynomial $p \in \K\langle x,y\rangle$ (a series $p \in \kx$) have a {\it degree} $n$, if the {\it minimal} degree of monomial on $x,y$, present in $p$ (with nonzero coefficient) is $n$. We will use this definition of degree throughout the paper.

\begin{definition} We say that the ideal $\overline I$ is the completion of the ideal $I \triangleleft \K\langle x,y\rangle$ if
$\overline I= \bigcap\limits_{n=1}^{\infty} I^{[n]}$,
where $I^{[n]} =I + id\ \text{(monomials of}\ \deg (n+1)).$ Obviously, $I^{[n+1]} \subseteq I^{[n]}$, and $I \subset \overline I$.
The algebra $\overline A = \K\langle x,y\rangle /\overline I$  is then called a completion algebra of $A=\K\langle x,y\rangle /I$.
\end{definition}

Note, that the completion algebra $\overline A$ is a quotient of algebra $A$ itself.

It is easy to see from this definition that $\kx /id_{\kx} (R) = {\K\langle x,y\rangle/ \overline {id_{\K\langle x,y\rangle} (R)}}$.

We will mention in this section few more facts on this construction, which we  use freely throughout the text.

Now let us give two equivalent definition of an algebra given by a potential.

\begin{definition}Let $F$ be a cyclic polynomial $F\in \K\langle x,y \rangle /[\K\langle x,y \rangle, \K\langle x,y \rangle] ,$
the {\it potential algebra} $A(F)$,
is the
factor of ${\mathbb K}\langle x,y\rangle$ by the ideal $I_F$ generated by
$\frac{\partial F}{\partial x}$ and $\frac{\partial F}{\partial y}$, where the linear maps
$\frac{\partial}{\partial x}, \frac{\partial}{\partial y}:{\mathbb K}\langle x,y\rangle\to {\mathbb K}\langle x,y\rangle$
are defined on
monomials as follows:

$$
\frac{\partial w}{\partial x}=\left\{\begin{array}{ll}u&\text{if $w=xu$,}\\ 0&\text{otherwise,}\end{array}\right.
\qquad \frac{\partial w}{\partial y}=\left\{\begin{array}{ll}u&\text{if $w=yu$,}\\ 0&\text{otherwise.}\end{array}\right.
$$

\end{definition}

\begin{definition}\label{Ginzburg} (Ginzburg )\cite{Ginzburg}
Associated to any (not necessary cyclic) polynomial $\Phi \in \K\langle x,y\rangle $ the potential algebra is the
factor of ${\mathbb K}\langle x,y\rangle$ by the ideal $I_{\Phi}$ generated by
$\frac{\partial \Phi}{\partial x}$ and $\frac{\partial \Phi}{\partial y}$, where the linear maps
$\frac{\partial}{\partial x}, \frac{\partial}{\partial y}:{\mathbb K}\langle x,y\rangle\to {\mathbb K}\langle x,y\rangle$
are defined on
monomials as follows.
 Given a monomial $u = z_{i_1}z_{i_2} ...z_{i_r}$, $z_i=x$ or $y$,
define  $\frac{\partial u}{ \partial z_j} = \sum\limits_{s:i_s=j} z_{i_{s+1}} z_{i_{s+2}}...z_{i_r} z_{i_1} ...z_{i_{s-1}},$
 then extend this deﬁnition by linearity.
 \end{definition}

 The difference between these two definitions is that the first one works only for cyclic polynomials, and the second can be defined for any polynomials. However the class of potential algebras they both produce is the same.

 Note also one simple fact about the syzygy, which holds for any algebra given by a cyclic potential.

 \begin{lemma}\label{syz}
For every $F\in\K\langle x,y\rangle$ such that $F_0=0$, $F=x\frac{\partial F}{\partial x}+y\frac{\partial F}{\partial y}$. Furthermore, the equality $F=\frac{\partial F}{\partial x}x+\frac{\partial F}{\partial y}y$ holds if and only if $F$ is cyclicly invariant. In particular, $[x,\frac{\partial F}{\partial x}]+[y,\frac{\partial F}{\partial y}]=0$ if and only if $F$ is cyclicly invariant.
\end{lemma}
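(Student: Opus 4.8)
The plan is to prove all three statements by reducing, via linearity of the partial derivatives, to the case of a single monomial. Introduce the linear cyclic-shift operator $\rho\colon \K\langle x,y\rangle\to\K\langle x,y\rangle$ defined on a monomial $w=z_{i_1}z_{i_2}\cdots z_{i_r}$ of degree $r\ge 1$ by $\rho(w)=z_{i_2}\cdots z_{i_r}z_{i_1}$ and by $\rho=\mathrm{id}$ on scalars; recall that $F$ being cyclicly invariant means exactly $\rho(F)=F$, equivalently that the coefficient of each monomial in $F$ depends only on its cyclic equivalence class (this is $\rho$-invariance checked degree by degree, since $\rho^{r}$ fixes every degree-$r$ monomial).

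First I would record the two identities on a monomial $w=z_{i_1}\cdots z_{i_r}$ with $r\ge 1$. If $z_{i_1}=x$ then $\frac{\partial w}{\partial x}=z_{i_2}\cdots z_{i_r}$ while $\frac{\partial w}{\partial y}=0$, since $w$ does not begin with $y$; hence $x\frac{\partial w}{\partial x}+y\frac{\partial w}{\partial y}=w$ and $\frac{\partial w}{\partial x}x+\frac{\partial w}{\partial y}y=z_{i_2}\cdots z_{i_r}z_{i_1}=\rho(w)$. The case $z_{i_1}=y$ is identical with the roles of $x$ and $y$ interchanged. Summing over the monomials of $F$, all of which have degree $\ge 1$ because $F_0=0$, yields $F=x\frac{\partial F}{\partial x}+y\frac{\partial F}{\partial y}$ and $\frac{\partial F}{\partial x}x+\frac{\partial F}{\partial y}y=\rho(F)$.

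The first of these is precisely the first claim. The second shows that $F=\frac{\partial F}{\partial x}x+\frac{\partial F}{\partial y}y$ is equivalent to $F=\rho(F)$, i.e. to $F$ being cyclicly invariant, which is the ``furthermore'' assertion. Subtracting the two identities gives $[x,\frac{\partial F}{\partial x}]+[y,\frac{\partial F}{\partial y}]=\bigl(x\frac{\partial F}{\partial x}+y\frac{\partial F}{\partial y}\bigr)-\bigl(\frac{\partial F}{\partial x}x+\frac{\partial F}{\partial y}y\bigr)=F-\rho(F)$, which vanishes if and only if $F$ is cyclicly invariant, proving the ``in particular'' part. I do not expect a serious obstacle: the whole argument is the monomial bookkeeping above, and the only points requiring attention are that on a monomial beginning with $x$ the $y$-derivative contributes nothing (and symmetrically), and that the hypothesis $F_0=0$ is exactly what lets one discard the rotation-fixed scalar term throughout.
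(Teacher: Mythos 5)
Your proof is correct and is exactly the routine monomial computation the paper has in mind when it dismisses the lemma as ``Trivial'': with the paper's first definition of $\frac{\partial}{\partial x},\frac{\partial}{\partial y}$, one checks on each monomial that $x\frac{\partial w}{\partial x}+y\frac{\partial w}{\partial y}=w$ while $\frac{\partial w}{\partial x}x+\frac{\partial w}{\partial y}y$ is the cyclic shift of $w$, and everything follows by linearity. Your identification of $\frac{\partial F}{\partial x}x+\frac{\partial F}{\partial y}y$ with $\rho(F)$ and the use of $F_0=0$ are exactly the points that make the statement work, so there is nothing to add.
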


\begin{proof}
Trivial.
\end{proof}

\begin{proposition}\label{prop1} Let $A=\kx / id_{\kx} (F)$ where $F$ is not necessarily homogeneous  polynomial, is a finite dimensional algebra. Then A is nilpotent.
\end{proposition}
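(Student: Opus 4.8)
The plan is to show that the augmentation ideal $\mathfrak{m} = (x,y) \subset A$ is nilpotent, which for a finite dimensional algebra $A = \kx/id(F)$ is equivalent to nilpotency of $A$ (since $A$ is a quotient of a power series ring, it is local with maximal ideal $\mathfrak{m}$, and $A/\mathfrak{m} = \K$). First I would observe that because $F \in \kx$ with $F_0 = 0$, the partial derivatives $\frac{\partial F}{\partial x}$ and $\frac{\partial F}{\partial y}$ have no constant term, so the defining relations lie in $\mathfrak{m}$; hence $A$ is indeed local. The content of the proposition is then: $\mathfrak{m}^N = 0$ for some $N$.

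The key step is to exploit the descending chain $\mathfrak{m} \supseteq \mathfrak{m}^2 \supseteq \mathfrak{m}^3 \supseteq \cdots$. Each $\mathfrak{m}^k$ is a finite dimensional vector space (as $A$ is finite dimensional), so the dimensions $\dim \mathfrak{m}^k$ form a nonincreasing sequence of nonnegative integers, which must stabilize: there is $N$ with $\mathfrak{m}^N = \mathfrak{m}^{N+1}$. At this point I would invoke Nakayama's lemma: $\mathfrak{m}^N$ is a finitely generated $A$-module with $\mathfrak{m} \cdot \mathfrak{m}^N = \mathfrak{m}^N$, and $\mathfrak{m}$ is the Jacobson radical of the local ring $A$, so $\mathfrak{m}^N = 0$. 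Therefore $A$ is nilpotent.

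The one point that deserves care — and the place I expect the only genuine friction — is justifying that $A$ is local with Jacobson radical $\mathfrak{m}$, so that Nakayama applies. This follows from the fact noted in the preliminary section that $\kx/id_{\kx}(F) = \K\langle x,y\rangle / \overline{id(F)}$: any element of $A$ not in $\mathfrak{m}$ has nonzero scalar term and is therefore invertible in the completion (geometric series), and this inverse survives in the quotient $A$; hence $\mathfrak{m}$ is exactly the set of non-units and $A$ is local. Alternatively, one can avoid Nakayama entirely: once $\mathfrak{m}^N = \mathfrak{m}^{N+1}$, pick a finite generating set $w_1,\dots,w_s$ of $\mathfrak{m}^N$ as a vector space; each $w_i = \sum_j a_{ij} w_j$ with $a_{ij} \in \mathfrak{m}$, so $(I - (a_{ij})) \vec{w} = 0$, and $\det(I - (a_{ij}))$ is a unit in $A$ (its scalar term is $1$), forcing $\vec{w} = 0$, i.e. $\mathfrak{m}^N = 0$. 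Either route closes the argument; I would present the Nakayama version for brevity, spelling out locality from the power-series description.
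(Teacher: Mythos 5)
Your Nakayama argument is correct, but it follows a genuinely different route from the paper's. The paper argues elementwise: for any $a$ in the augmentation ideal, finite dimensionality forces a polynomial relation $p(a)=0$; since a series with nonzero constant term is invertible in $\kx$, the constant term of $p$ must vanish, and factoring out the lowest power gives $a^k(1+\alpha'_1a+\cdots)=0$ with invertible second factor, hence $a^k=0$. So $A$ is nil, and the paper then invokes the classical theorem that a finite dimensional nil algebra is nilpotent. You instead prove nilpotency of $\mathfrak{m}$ directly: locality of $A$ (via the same invertibility of series with nonzero scalar term), stabilization of the chain $\mathfrak{m}\supseteq\mathfrak{m}^2\supseteq\cdots$ by finite dimensionality, and Nakayama with $\mathfrak{m}=J(A)$. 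Your route avoids the nil-implies-nilpotent theorem altogether and replaces it by the radical form of Nakayama, which holds verbatim for noncommutative rings; the paper's route is shorter on the page but outsources the real work to that Wedderburn-type theorem. One caveat on your ``alternative'' ending: $A$ is noncommutative, so $\det\bigl(I-(a_{ij})\bigr)$ is not defined and the determinant trick does not literally apply; if you want that variant you should argue instead that a matrix $I-N$ with the entries of $N$ in $\mathfrak{m}$ is invertible over $A$ (equivalently, matrices over the radical lie in the radical of the matrix algebra), or simply keep the Nakayama version you propose to present, which is sound as stated.
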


\begin{proof} Since $A$ is finite dimensional, if we take enough  powers of $a$, they will be linearly dependant, so $a$ is algebraic:
$p(a)=\alpha_0+\alpha_1 a +\alpha_2 a^2+...+\alpha_n a^n=0.$ Note that if  $\alpha_0$ would be nonzero, then the polynomial is invertible in $\kx$ and it can not be equal to zero. Let us take the maximal power of $a$ such that $p(a)=a^k (1+\alpha'_1 a +\alpha'_2 a^2+...)$ since the second multiple is invertible in $\kx$, we have $a^k=0$. Thus $A$ is nil, and being finite dimensional it is nilpotent.
\end{proof}

\begin{proposition}\label{prop2} The change of variables (not necessary linear) in the potential coincide with the same change of variables in the relations.
\end{proposition}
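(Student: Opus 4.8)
The plan is to spell out exactly what the statement asserts and then verify it by a direct functoriality argument. Let $\varphi\colon \kx \to \kx$ be an algebra endomorphism determined by $x \mapsto \varphi(x)$, $y\mapsto \varphi(y)$, where $\varphi(x),\varphi(y)$ are power series with zero constant term (so that $\varphi$ is continuous in the $(x,y)$-adic topology and in fact an automorphism when the linear parts are invertible). Given a potential $F$, the two objects to compare are: on the one hand the ideal generated by the partial derivatives of $F$ and then transported by $\varphi$, i.e. $\varphi\bigl(\mathrm{id}(\tfrac{\partial F}{\partial x},\tfrac{\partial F}{\partial y})\bigr)$; on the other hand the ideal $\mathrm{id}(\tfrac{\partial (\varphi F)}{\partial x},\tfrac{\partial (\varphi F)}{\partial y})$ obtained by first substituting and then differentiating. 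The claim is that, up to the change of variables $\varphi$ itself, these define the same algebra; equivalently, $A(\varphi F) \cong A(F)$ via $\varphi$. Since $\varphi$ is an algebra isomorphism, it suffices to show that $\varphi$ carries the relation ideal of $A(F)$ onto the relation ideal of $A(\varphi F)$.

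First I would record the chain-rule-type identity for the noncommutative partials under substitution. For a monomial $w = z_{i_1}\cdots z_{i_r}$ one has, writing the Leibniz expansion of $\varphi(w) = \varphi(z_{i_1})\cdots\varphi(z_{i_r})$ and differentiating termwise,
$$\frac{\partial (\varphi w)}{\partial x} \;=\; \sum_{j} \Bigl(\varphi\bigl(\tfrac{\partial w}{\partial z_j}\bigr)\Bigr)^{\!\curvearrowright}\!\cdot \frac{\partial (\varphi z_j)}{\partial x}\,,$$
where the cyclic/rotation bookkeeping is exactly the one built into Definition~\ref{Ginzburg} and Lemma~\ref{syz}; extending by linearity gives the same formula with $w$ replaced by $F$. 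The precise form of the "rotation" factor does not matter for the conclusion — what matters is that each term of $\tfrac{\partial(\varphi F)}{\partial x}$ lies in the two-sided ideal generated by $\varphi\bigl(\tfrac{\partial F}{\partial x}\bigr)$ and $\varphi\bigl(\tfrac{\partial F}{\partial y}\bigr)$, because every summand has one of the $\varphi\bigl(\tfrac{\partial F}{\partial z_j}\bigr)$ as a two-sided factor. Hence $\mathrm{id}\bigl(\tfrac{\partial(\varphi F)}{\partial x},\tfrac{\partial(\varphi F)}{\partial y}\bigr) \subseteq \varphi\bigl(\mathrm{id}(\tfrac{\partial F}{\partial x},\tfrac{\partial F}{\partial y})\bigr)$. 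Applying the same reasoning to $\varphi^{-1}$ and to the potential $\varphi F$ (using $\varphi^{-1}\varphi F = F$) gives the reverse inclusion, so the two ideals coincide, which is the assertion.

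I would then note the two routine points needed to make this rigorous in the power-series setting: (i) the substitution $\varphi$ must be continuous so that $\varphi F$ and the above infinite sums are well-defined elements of $\kx$ and $\varphi$ descends to completions — this holds precisely because $\varphi(x),\varphi(y)$ have zero constant term, matching the degree conventions of the Preliminary section; and (ii) one should check the base case of the identity on a single variable, $\tfrac{\partial(\varphi x)}{\partial z} $, and on a product of two series, from which the general monomial case follows by induction on length. The main obstacle is purely bookkeeping: getting the cyclic-rotation terms in the noncommutative Leibniz rule to line up correctly so that one genuinely sees a two-sided ideal membership rather than just a one-sided one; once that is arranged the symmetry argument in $\varphi \leftrightarrow \varphi^{-1}$ closes everything immediately. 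For the nonlinear case there is nothing extra to do beyond (i), since associativity of composition $\varphi\circ\varphi^{-1} = \mathrm{id}$ is all that was used.
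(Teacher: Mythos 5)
Your argument is correct in substance, but it closes the proof by a different mechanism than the paper. The paper also starts from a chain rule for the Ginzburg derivatives, but only for the elementary substitutions $x\mapsto x+m(x,y)$, $y\mapsto y$ that are actually composed later: there the rule takes the explicit form $\partial_x G=\partial_x F(x+m,y)\cdot(1+\partial_x m)$ (similarly for $\partial_y$), and since $1+\partial_x m$ is invertible in $\kx$ the ideal generated by $\partial_x G,\partial_y G$ visibly equals the ideal generated by the substituted derivatives --- equality in one stroke, with no inverse substitution needed. You instead establish only the containment $\mathrm{id}\bigl(\partial_x(\varphi F),\partial_y(\varphi F)\bigr)\subseteq\varphi\bigl(\mathrm{id}(\partial_x F,\partial_y F)\bigr)$ from two-sided-factor membership, and obtain the reverse containment by running the same argument for $\varphi^{-1}$ applied to $\varphi F$; this buys generality (any continuous automorphism of $\kx$ with zero constant terms and invertible linear part, not just the triangular ones), at the price of being less explicit about what the new relation ideal is generated by. One caution about your displayed formula: a cyclically rotated copy of $\varphi\bigl(\tfrac{\partial w}{\partial z_j}\bigr)$ is \emph{not} a two-sided multiple of it, so as written the formula does not exhibit the membership you need. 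The correct bookkeeping is that each occurrence of the differentiation variable inside $\varphi(z_j)$ splits it as $a\,x\,b$ and contributes a summand $b\cdot\varphi\bigl(\tfrac{\partial w}{\partial z_j}\bigr)\cdot a$ (the double-derivative form of the chain rule); with that form, your key claim --- every summand of $\partial_x(\varphi F)$ has some $\varphi\bigl(\tfrac{\partial F}{\partial z_j}\bigr)$ as a two-sided factor --- is indeed correct, and the symmetry argument then goes through.
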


\begin{proof} The proof is based on the following lemma.

\begin{lemma} The formula for the derivative of the composition works, and easy to check in the Ginzburg definition of the potential algebra.
 Let $G=F(u(x,y), v(x,y))$, where $u(x,y), v(x,y)$ are monomials, then

$$\frac{\partial G}{\partial x} = \partial_1 F (u(x,y), v(x,y)) \cdot  \frac{\partial u(x,y)}{\partial x}
 +
\partial_2 F (u(x,y), v(x,y)) \cdot  \frac{\partial v(x,y)}{\partial x} $$

$$
\frac{\partial G}{\partial y} = \partial_1 F (u(x,y), v(x,y)) \cdot  \frac{\partial u(x,y)}{\partial y} +
\partial_2 F (u(x,y), v(x,y)) \cdot  \frac{\partial v(x,y)}{\partial y}.$$

Here $\cdot$ stands for 'noncommutative multiplication' of monomials.
\end{lemma}

\begin{proof}Easy check.
\end{proof}

 In our case, $G=F(x+m(x,y), y)$, where $m(x,y)$ is a monomial on $x,y$ of degree two or bigger, then

$$
\frac{\partial G}{\partial x} = \frac{\partial F}{\partial x} (x+m(x,y), y) \cdot (1+ \frac{\partial m(x,y)}{\partial x})
$$

$$
\frac{\partial G}{\partial y} = \frac{\partial F}{\partial y} (x+m(x,y), y)(1+ \frac{\partial m(x,y)}{\partial y})+
\frac{\partial F}{\partial y} (x+m(x,y), y)
.$$

Since $1+ m(x,y)$ is invertible in $\kx$, we see that ideals generated by $\frac{\partial G}{\partial x}, \frac{\partial G}{\partial y}$ and $\frac{\partial F}{\partial x} (x+m(x,y), y), \frac{\partial F}{\partial y} (x+m(x,y), y)$ do coincide.
\end{proof}

\begin{proposition}\label{prop3} Let $A=\kx/ id_{\kx} (\partial_x F, \partial_y F)$. If this algebra is nilpotent, then the change of variables  of the form
$x \to x+ f(x,y); y \to y$,
where $f(x,y) $ is a polynomial on $x,y$ of degree two or bigger, is invertible.
\end{proposition}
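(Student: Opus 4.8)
The plan is to build the inverse substitution explicitly in the $\mathfrak m$-adic topology, $\mathfrak m=(x,y)$, and then invoke the nilpotency of $A$ to replace it by a polynomial one. Let $\sigma$ be the continuous algebra endomorphism of $\kx$ determined by $\sigma(x)=x+f(x,y)$, $\sigma(y)=y$; since $\deg f\ge 2$, the map $\sigma$ preserves $\mathfrak m$ and induces the identity on $\mathrm{gr}_{\mathfrak m}\kx$, so, $\kx$ being $\mathfrak m$-adically complete, $\sigma$ is automatically an automorphism of $\kx$. Because $\mathrm{gr}(\sigma^{-1})=\mathrm{id}$ as well, the inverse again has the stated shape: $\sigma^{-1}(y)=y$ (apply $\sigma^{-1}$ to $\sigma(y)=y$) and $\sigma^{-1}(x)=x+g(x,y)$ with $\deg g\ge 2$. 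Concretely, $g$ is the $\mathfrak m$-adic limit of the approximants $g_0=0$, $g_{n+1}=-f(x+g_n,y)$; these are polynomials of degree $\ge 2$ (for $n\ge 1$) and converge because $g_{n+1}-g_n\in\mathfrak m^{\,n+2}$: substituting $g_n$ for $g_{n-1}$ inside a word of $f$ of length $\ge 2$ raises the discrepancy by at least one filtration degree beyond $\deg(g_n-g_{n-1})$.

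By Proposition~\ref{prop2} (applied, by linearity, to the polynomial $f$), $\sigma$ carries $I_F=id_{\kx}(\partial_xF,\partial_yF)$ onto the ideal $I_G$ of the transformed potential $G=F(x+f,y)$, the factors $1+\partial_xf$, $1+\partial_yf$ produced there being invertible in $\kx$ because $\deg f\ge 2$; hence $\sigma$ and $\sigma^{-1}$ descend to mutually inverse isomorphisms between $\kx/I_F$ and $\kx/I_G$. This already proves the statement over formal power series, and here is where nilpotency enters to make the inverse a genuine polynomial change of variables: if $\mathfrak m^{N}=0$ in $A$, then modulo $I_F$ the approximants stabilise, $g_N=g_{N+1}=\dots$, so $\bar g:=g_N$ is a polynomial of degree $\ge 2$ with $\bar g=-f(x+\bar g,y)$ in $A$, and the polynomial substitution $x\mapsto x+\bar g(x,y)$, $y\mapsto y$ induces the isomorphism $\kx/I_G\to A$ inverse to the one induced by $\sigma$. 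Equivalently, $\sigma$ acts on the finite-dimensional quotient $\kx/\mathfrak m^{N}$ as the identity plus a nilpotent linear operator, hence invertibly, and passing to the further quotient by the relations gives the claim — an argument that collapses without the finite length of the filtration, i.e. without nilpotency.

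I expect the one genuinely delicate point to be the estimate $g_{n+1}-g_n\in\mathfrak m^{\,n+2}$ (equivalently, the stabilisation of the approximants modulo $\mathfrak m^{N}$), which is the engine of the whole argument; everything else is routine bookkeeping with the $\mathfrak m$-adic filtration together with the already established Proposition~\ref{prop2}.
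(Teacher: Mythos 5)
Your argument is correct and rests on the same mechanism as the paper's one-line proof: the substitution is unipotent (identity plus terms of higher filtration degree) on the $\mathfrak m$-adic associated graded, and nilpotency truncates the resulting inverse series to a polynomial — your closing remark about ``identity plus a nilpotent linear operator'' is exactly the paper's ``triangular matrix with 1s on the diagonal, finite by nilpotency.'' You simply carry out in detail (via the approximants $g_{n+1}=-f(x+g_n,y)$ and the estimate $g_{n+1}-g_n\in\mathfrak m^{n+2}$) what the paper asserts in one sentence, and your extra observation that the substitution is already an automorphism of $\kx$ before passing to the nilpotent quotient is a correct strengthening, not a deviation.
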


\begin{proof} The change of variables is given by triangular matrix with 1th on the diagonal, and this matrix is finite due to nilpotency condition.
\end{proof}

\begin{proposition}\label{radical} Let $A=\kx/ id_{\kx} (R)$, and $B=\k\langle x,y \rangle /(R)$. If $A$ is finite dimensional and nilpotent, then $A$ is a Jacobson radical of $B$: $A=Jac (B)$
\end{proposition}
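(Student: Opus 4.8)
The strategy is to identify $A$ with the $\bar{\mathfrak m}$-adic completion of $B$ and to show that, for the relations at hand, this completion map is already an isomorphism, so that the Jacobson radical of $B$ is the maximal ideal of $A$ --- which is the nilpotent algebra that, following the convention of Proposition~\ref{prop1}, is denoted $A$ in the statement. Write $\mathfrak m=(x,y)\triangleleft\K\langle x,y\rangle$ and let $\bar{\mathfrak m}_A,\bar{\mathfrak m}_B$ be its images in $A,B$. By the identity $\kx/id_{\kx}(R)=\K\langle x,y\rangle/\overline{id_{\K\langle x,y\rangle}(R)}$ recorded in the Preliminaries, and since $id(R)\subseteq\overline{id(R)}$, there is a canonical surjective algebra homomorphism $\pi\colon B\twoheadrightarrow A$ with $\ker\pi=\overline{id(R)}/id(R)$; this is the first step.

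Next I would use finite dimensionality. By Proposition~\ref{prop1} the ideal $\bar{\mathfrak m}_A$ is nilpotent, say $\bar{\mathfrak m}_A^{\,N}=0$, which means $\mathfrak m^{N}\subseteq\overline{id(R)}=\bigcap_n\bigl(id(R)+\mathfrak m^{n+1}\bigr)$. Taking $n=N$ already gives $\overline{id(R)}\subseteq id(R)+\mathfrak m^{N}$, and together with $\mathfrak m^{N}\subseteq\overline{id(R)}$ this forces $\overline{id(R)}=id(R)+\mathfrak m^{N}$. Hence $A\cong B/\bar{\mathfrak m}_B^{\,N}$, the kernel of $\pi$ is exactly $\bar{\mathfrak m}_B^{\,N}$, and (reading $\mathfrak m^{N}\subseteq id(R)+\mathfrak m^{N+1}$ in $B$) one gets $\bar{\mathfrak m}_B^{\,N}=\bar{\mathfrak m}_B^{\,N+1}$. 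Thus $K:=\ker\pi=\bar{\mathfrak m}_B^{\,N}$ is an idempotent two-sided ideal, $K=\bar{\mathfrak m}_BK=K^{2}$, and it is finitely generated as a one-sided $B$-module by the images of the finitely many monomials of degree $N$; also $\bar{\mathfrak m}_B/\bar{\mathfrak m}_B^{\,2}$ is two-dimensional, spanned by the classes of $x$ and $y$, so $\bar{\mathfrak m}_B$ is the unique maximal ideal of $B$ containing $\bar{\mathfrak m}_B^{\,2}$.

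The heart of the argument --- and the step I expect to be the main obstacle --- is to show that $K=0$, equivalently that $B$ has no idempotents other than $0$ and $1$ (if $e\in B$ is idempotent then $\pi(e)\in\{0,1\}$ since $A$ is local, so either $e$ or $1-e$ lies in $K$; conversely an idempotent in $K$ is a nontrivial idempotent of $B$). Once $B$ is known to have only trivial idempotents, $K$ is an idempotent ideal contained in the radical $\bar{\mathfrak m}_B$, hence $K=\bar{\mathfrak m}_BK$ with $K$ finitely generated gives $K=0$ by Nakayama, so $B=A$ is finite dimensional and $Jac(B)=\bar{\mathfrak m}_B=\bar{\mathfrak m}_A$, the nilpotent algebra $A$. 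The nontrivial content is therefore precisely the exclusion of a spurious splitting $B\cong A\times(\bar{\mathfrak m}_B^{\,N})$ in which $\bar{\mathfrak m}_B^{\,N}$ is a nonzero ideal with its own identity; for the presentations relevant to this paper the relations lie in $\mathfrak m^{2}$ and one verifies --- either directly from the potential or from the absence of nontrivial idempotents in the finite-dimensional algebra under consideration --- that no such factor occurs, after which Steps~1--2 and the Nakayama argument complete the proof.
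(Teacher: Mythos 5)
The paper states Proposition~\ref{radical} with no proof at all, so there is no argument of the author's to compare yours against; judged on its own terms, your proposal contains a step that fails. Your Steps 1--2 are correct and genuinely useful: the kernel $K$ of $\pi\colon B\twoheadrightarrow A$ is exactly $\bar{\mathfrak m}_B^{\,N}=\bar{\mathfrak m}_B^{\,N+1}$, an idempotent two-sided ideal, finitely generated as a one-sided ideal by the monomials of length $N$. The problem is the goal you then set, namely $K=0$, equivalently $B=A$ and ``$B$ has no nontrivial idempotents.'' That is false for algebras to which the proposition must apply, and the paper itself exhibits one: for $R_2=\{xy+yx,\ x^2+y^3+y^4\}$ the text records that $A_2=A_2'/(y^6)$ with $y^6\neq 0$ in $B=A_2'$. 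Indeed, in that $B$ one computes $y^3x=0$ and $y^6(1+y)=0$, hence $y^6=y^8=\cdots=(y^6)^2$, and $y^6$ survives in the one-dimensional representation $x\mapsto 0$, $y\mapsto -1$; so $y^6$ is a nontrivial central idempotent, $B\cong A_2\times\K$, and $K=\K y^6\neq 0$. The ``spurious splitting'' you hope to rule out genuinely occurs. The actual content of the proposition is that $Jac(B)\cong \mathfrak m_A$ \emph{despite} $K\neq 0$: what must be proved is $Jac(B)\cap K=0$ together with $Jac(B)+K=\ker(B\to\K)$, not $K=0$.

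Two further points compound this. Even within your own framing, the decisive step is deferred (``one verifies \dots that no such factor occurs''), so the heart of the argument is absent; and the Nakayama step is circular as written, since concluding $K=0$ from $K=\bar{\mathfrak m}_BK$ requires $K\subseteq Jac(B)$, i.e.\ exactly the inclusion $\bar{\mathfrak m}_B\subseteq Jac(B)$ that the proposition asserts --- the fact that $\bar{\mathfrak m}_B$ is a maximal ideal, or that $B$ has no nontrivial idempotents, does not by itself place $\bar{\mathfrak m}_B$ inside the radical. A workable plan would keep your Steps 1--2 and then show that the idempotent ideal $K=\bar{\mathfrak m}_B^{\,N}$ meets $Jac(B)$ trivially and maps onto $\bar{\mathfrak m}_B/Jac(B)$ (for instance by lifting the identity of $(K+Jac(B))/Jac(B)$ to an idempotent of $B$, in the cases where $B$ is finite dimensional as in the paper's examples), whence $Jac(B)\cong(Jac(B)+K)/K=\bar{\mathfrak m}_B/K=\mathfrak m_A$; this is a different statement from $K=0$ and requires a different argument from the one you propose.
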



Another essential tool we will use here is the Gr\"obner bases theory, so we recall some basic terminology here.
For more detailed account see, for example, \cite{Mora}

Suppose in ${\mathbb K}\langle Y \rangle$ we have fixed some well-ordering (that is, ordering compatible with multiplication), for example, (left-to-right) degree-lexicographical ordering: we fix an order on variables, say $y_1 <...<y_n$, and compare monomials on $Y$ of the same degree lexicographically (from left to right). Then we say, that monomials of higher (or lower) degree are bigger in the ordering.  Polynomials are compared by their highest terms in this ordering. This order should have d.c.c.

\begin{definition}
Monomials $ u,v \in {\mathbb K}\langle Y \rangle$ form an ambiguity $(u,v),$ if for some $w\in {\mathbb K}\langle Y\rangle$, $uw=wv$.
\end{definition}

\begin{definition}
 Let $u,v$ be  two monomials $u,v$, which are highest terms of the elements $U,V$ from the ideal $I\in {\mathbb K}\langle Y \rangle: U=u+\tilde u, V=v+\tilde v$, where $\tilde u, \tilde v \in {\mathbb K}\langle Y \rangle$, smaller than $u,v\in \langle Y \rangle $ respectively: $\tilde u < u, \tilde v < v$. Then the resolution of the ambiguity $(u,v)$ formed by monomials $u,v$ is a polynomial $Uw-wV=\tilde u w-w\tilde v$, which is reducible to zero modulo  generators of an ideal.
\end{definition}

\begin{definition} A reduction on $\K \langle Y \rangle$ modulo generators of an ideal $f_i=\bar f_i+\tilde  f_i$, where $\bar f_i$ is a highest term of $f_i$, is a collection of linear maps defined on monomials as follows:
$r_{u \bar f_i v}(w)= u \tilde f_i v$, if $w=u \bar f_i v$, and $w$ otherwise.
\end{definition}

The polynomial is called reducible to zero if there exists a sequence of reductions modulo generators of an ideal, which results in zero.

In our arguments we often follow the Buchberger algorithm for construction of noncommutative Gr\"obner basis.

\section{Classification of 8-dimensional  contraction algebras}

Let $A$ be a potential completion algebra

$$A=\kx/ id_{\kx} (\partial_x F, \partial_y F)$$
 for some potential $F \in \k\langle x,y \rangle$.

 We are interested in the case, when potential  algebra is finite dimensional.
 As we have proved in \cite{AN, ISPA} it is only possible if the potential is nonhomogeneous and has terms of degree 3.
 General term of degree three is $F_3=a_1 x^3+a_2 x^2y\ww +a_3 xy^2\ww  +a_4 y^3$.
 Let us start with the

 \begin{lemma} By a non-degenerate linear transformation a general potential $F=F_3+f_4+...+F_r, F_3=a_1 x^3+a_2 x^2y\ww +a_3 xy^2\ww  +a_4 y^3$
 can be made into one of the four potentials:
 $F=0+F_4+...+F_r, F=x^3+F_4+...+F_r,  F=x^2y\ww +F_4+...+F_r, or  F=x^3+y^3+F_4+...+F_r.$
 \end{lemma}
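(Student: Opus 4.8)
The plan is to first observe that a non-degenerate linear transformation $x\mapsto ax+by$, $y\mapsto cx+dy$ is a graded algebra automorphism of $\K\langle x,y\rangle$; in particular it preserves each homogeneous component and the commutator subspace, hence acts on the space of cyclic polynomials and sends $F=F_3+F_4+\dots+F_r$ to $F_3'+F_4'+\dots+F_r'$ with $F_k'$ again homogeneous of degree $k$. So only the cubic part needs to be normalized, and the higher terms are merely carried along as new, unspecified homogeneous pieces. The two substantive steps are then: (1) reduce the normalization of $F_3$ to a $\mathrm{GL}_2(\K)$-classification of binary cubic forms, and (2) carry out that classification.

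For step (1) I would identify the $4$-dimensional space of cyclic cubics, with basis $x^3,\ x^2y\ww,\ xy^2\ww,\ y^3$, with the space of ordinary binary cubic forms $a_1X^3+a_2X^2Y+a_3XY^2+a_4Y^3$, matching $x^iy^j$ (cyclicized) with a nonzero scalar multiple of $X^iY^j$. A direct expansion of $(aX+bY)^3$, $(aX+bY)^2(cX+dY)$, etc., compared with the cyclic reduction of $(ax+by)^3$, $(ax+by)^2(cx+dy)$, etc., shows that under a linear change of variables the coefficient vector $(a_1,a_2,a_3,a_4)$ transforms in exactly the same way on both sides, the only bookkeeping point being the harmless factor $x^2y\ww\leftrightarrow 3X^2Y$, $xy^2\ww\leftrightarrow 3XY^2$; this needs $\mathrm{char}\,\K\neq 2,3$, which I assume (so the case $\K=\C$ is unaffected). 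Hence two cyclic cubics differ by a non-degenerate linear change of variables if and only if the associated binary forms lie in the same $\mathrm{GL}_2(\K)$-orbit.

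For step (2), if $F_3=0$ there is nothing to do. Otherwise the associated nonzero binary cubic $g$ factors over $\K$ (taken algebraically closed, the case of interest) into three linear forms, and I would split into cases by the multiplicity pattern of the roots on $\mathbb{P}^1$. If $g=c\,\ell^3$ (triple root), complete $\ell$ to a basis and rescale to reach $X^3$, i.e. $F_3$ becomes $x^3$. If $g=c\,\ell_1^2\ell_2$ with $\ell_1,\ell_2$ independent (double plus simple root), take $\ell_1,\ell_2$ as new coordinates and rescale to reach $X^2Y$, i.e. $F_3$ becomes $x^2y\ww$. If $g$ has three distinct roots, use that $\mathrm{PGL}_2$ acts simply transitively on triples of distinct points of $\mathbb{P}^1$: a suitable linear change turns $g$ into a scalar multiple of $X^3+Y^3$ (whose roots are the three cube roots of $-1$, distinct since $\mathrm{char}\,\K\neq3$), and one more uniform rescaling $x,y\mapsto\lambda x,\lambda y$ removes the scalar, so $F_3$ becomes $x^3+y^3$. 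Translating back through the dictionary of step (1) gives the four normal forms for $F$.

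The two places that actually carry content, and that I would write out carefully, are: the $\mathrm{GL}_2$-equivariance of the cyclic-cubic $\leftrightarrow$ binary-cubic correspondence (a short but genuine computation, whose only hazard is small characteristic, where both the factor $3$ and the factorization-into-linear-forms argument would need revisiting); and the scalar normalizations in the triple-root and three-distinct-roots cases, which require cube roots in $\K$ to land on coefficient exactly $1$, hence an algebraically closed (or at least suitably closed) ground field. Everything else — tracking that the higher-degree terms just become other higher-degree terms, and the elementary case analysis — is routine.
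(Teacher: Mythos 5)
Your proposal is correct and follows essentially the same route as the paper: pass to the abelianised binary cubic (equivalently, your cyclic--binary dictionary), factor it into linear forms, and split into cases by root multiplicity to reach $x^3$, $x^2y\ww$, or $x^3+y^3$, with the higher-degree terms simply carried along. The only cosmetic difference is that in the three-distinct-roots case the paper writes the explicit substitution with a cube root of unity $\theta$, whereas you invoke transitivity of $\mathrm{PGL}_2$ on triples of points plus a rescaling; both rest on the same (algebraically closed, characteristic $\neq 2,3$) hypotheses you flag.
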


\begin{proof} Indeed, consider the abelianisation of $F_3$:
$F_3^{ab}= a_1 x^3+3a_2 x^2y +3a_3 xy^2  +a_4 y^3.$ It can be decomposed as $F_3^{ab}=u_1(x,y)u_2(x,y)u_3(x,y)$ with $\deg u_i=1$.
If $F_3^{ab}\neq 0$, there are three possibilities: all lines $u_1,u_2,u_3$ are parallel: $u_1=\alpha u, u_2=\beta u, u_3=\gamma u, u=ax+by, (a,b) \neq(0,0)$, then there is a change of variables making the $F_3$ into $x^3$. Another possibility is that two of $u_1$ and $u_2$ are parallel, and they are not parallel to $u_3$, then $F$ can be made into $x^2y\ww$. If all three $u_1, u_2, u_3$ are pairwise nonparallel, then the linear transformation $x\to x', y\to y'$, where $u_1=x'+y', u_2=x'+\theta y', \theta^3=1$ will bring $u_1u_2u_3=(x+y)(x+\theta y)(x+\theta^2 y)$ to $x^3+y^3$.
\end{proof}

Thus, we will use the fact  that up to isomorphism  a nonzero degree 3 homogeneous cyclic  potential can be only either $x^3$ or $x^3+y^3$ or $xy^2\ww=xy^2+y^2x+yxy$.

\subsection{Potential with cubic term $x^2y\ww$}\label{x2y}

We will show first that in the cases of potential with $F_3=x^2y\ww$ and $F_3=x^3$ the dimension of algebra is bigger or equal than 9. Then main our consideration for the dimension 8 will go to the case of potential with $F_3=x^3+y^3$.

\begin{lemma}Let $A=\kx/ id_{\kx} (\partial_x F, \partial y F)$
 for the potential $F \in \k\langle x,y \rangle,
 F=x^2y\ww +F_4+...+F_r.$ Then $\dim A \geq 9$.
\end{lemma}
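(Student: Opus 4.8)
I would read the defining relations off the potential and then bound the Hilbert function of $A$ from below by a Gr\"obner basis computation, the infinite-dimensional case being trivial. Since the cubic part of $F$ is $x^2y\ww$, the (cyclic) partial derivatives are $r_1:=\partial_x F=xy+yx+R_1$ and $r_2:=\partial_y F=x^2+R_2$, where $R_1,R_2$ are power series all of whose monomials have degree $\ge3$, so $A=\kx/id_{\kx}(r_1,r_2)$. If $\dim A=\infty$ there is nothing to prove; so assume $\dim A<\infty$, whence $A$ is nilpotent by Proposition~\ref{prop1} and the Buchberger procedure below terminates. Fix the degree-then-lexicographic order with $x<y$ in which monomials of smaller degree count as larger (so that it is compatible with the filtration defining the completion); then $\deg r_1=\deg r_2=2$ with leading monomials $yx$ and $x^2$, and the normal words modulo $\{yx,x^2\}$ are exactly $y^j$ and $xy^j$, $j\ge0$. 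The overlaps to resolve are the words $x^3$ and $yx^2$, and a direct reduction (cancel the degree-$2$ leading parts, then kill the leftover cubic $xyx$ by $yx\mapsto-xy$ followed by $x^2\mapsto(\text{higher order})$) shows both $S$-polynomials reduce to elements of degree $\ge4$. Hence $\mathrm{gr}(I)$ agrees with the monomial ideal $(yx,x^2)$ in degrees $\le3$, so $\dim A_i$ equals $1,2,2,2$ for $i=0,1,2,3$, with $A_3$ spanned by $y^3$ and $xy^2$; in particular $\dim A\ge7$.

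Since $\dim A<\infty$ the Gr\"obner basis is finite, and every new leading monomial, being a normal word modulo $\{yx,x^2\}$, is some $xy^a$ or some $y^b$. Let $a$ be the least exponent with $xy^a$ a leading monomial and $b$ the least with $y^b$ one; both are finite and $a\le b$ (since $y^b$ divides $xy^b$), so the normal words of the full basis are $\{y^j:0\le j<b\}\cup\{xy^j:0\le j<a\}$ and $\dim A=a+b$. By the preceding paragraph $a\ge3$ (there is no leading monomial $xy$ or $xy^2$). Finally, a pure power of $y$ can enter the basis only through the overlap $x^2y^a=x\cdot(xy^a)=(x^2)\cdot y^a$; since $x^2$ lies in $(\text{order}\ge3)+I$ and $xy^a$ in $(\text{order}\ge a+2)+I$, once the degree-$(a+2)$ leading parts cancel this $S$-polynomial has degree $\ge a+3$, so $b\ge a+3$. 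Therefore $\dim A=a+b\ge2a+3\ge9$.

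I expect the last step to be the real obstacle: one must check that no leading monomial $y^{b'}$ with $b'<a+3$, and no other unexpected low-degree Gr\"obner element, is produced once the tails $R_1,R_2$ are fed into Buchberger. This becomes transparent after normalising the potential: an admissible change of variables (cf.\ Propositions~\ref{prop2}, \ref{prop3}) brings $F$ to $x^2y+y^4p(y)$, so that $r_1=xy+yx$ exactly and $r_2=x^2+q(y)$ with $q(y)=\partial_y\!\big(y^4p(y)\big)$, $\deg q\ge3$. Then the $yx^2$-overlap resolves to $0$; the $x^3$-overlap resolves to $x\big(q(-y)-q(y)\big)=-2x\,q^{\rm odd}(y)$, and since $\dim A<\infty$ forces $q^{\rm odd}\ne0$, this contributes the leading monomial $xy^a$ with $a=\deg q^{\rm odd}(y)$, an odd integer $\ge3$; the $x^2y^a$-overlap then resolves to a nonzero scalar multiple of $q(y)q^{\rm odd}(y)$, contributing $y^b$ with $b=\deg q+a$. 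After verifying that the remaining overlaps reduce to $0$, one obtains $\dim A=a+b=2\deg q^{\rm odd}(y)+\deg q(y)\ge2\cdot3+3=9$, with equality, e.g., for $F=x^2y\ww+y^4$.
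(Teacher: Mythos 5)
Your proposal is correct in substance, but it is not the argument the paper gives for this lemma: there the bound is obtained directly, without normalising the potential, by counting $\dim A_i/A_{i+1}$ for $i\le 5$ (dimensions $1,2,2,2$ in degrees $0$--$3$, then either $2$ in degree $4$, or $1$ in degree $4$ and at least $1$ in degree $5$), the key point being that the two ambiguities $x^3$ and $x^2y$ contribute at most one relation in degree $4$ because of the syzygy of Lemma~\ref{syz}. What you do instead coincides with the paper's subsequent general treatment of this cubic type: normalisation of the potential to $x^2y\ww+y^4p(y)$ (Lemma~\ref{zamx2y}), the Gr\"obner basis computation of Proposition~\ref{gen}, and the resulting dimension formula (Corollary~\ref{1}), from which the paper also rederives the bound $\ge 9$. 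Your route buys more — the exact dimension $2\deg q^{\rm odd}+\deg q$, which indeed agrees with $4n+k+9$ — at the price of the normalisation step, which you assert but do not prove: Propositions~\ref{prop2} and~\ref{prop3} only give that the substitutions are legitimate isomorphisms; the fact that a composition of them kills every term of degree $\ge4$ except pure powers of $y$ is precisely Lemma~\ref{zamx2y} and needs its own (easy, degree-by-degree) argument.

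Two points to tighten. First, since you want a lower bound, ``after verifying that the remaining overlaps reduce to $0$'' is not optional: an unresolved overlap would inject new low-order elements into the ideal and could destroy $b\ge a+3$; the remaining checks (the overlaps $yxy^a$, $xy^{b}$, and those involving $y^{b}$) are short and do all resolve, but they belong to the proof. Similarly, your second paragraph's claims that $a\le b$, that both are finite, and that a pure power of $y$ can only arise from the overlap $x^2y^a$ are not justified for general tails $R_1,R_2$; as you yourself note, the normalised computation is what carries the weight, so the middle paragraph should be presented as motivation only. Second, with the order you fixed (lower degree counts as larger), the new leading monomials are governed by the \emph{lowest}-degree terms: $a$ is the order of $q^{\rm odd}$ and $b=\mathrm{ord}(q)+a$. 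This matches your formulas only under the paper's convention that ``degree'' means the minimal degree of a monomial present; say this explicitly, since reading $\deg q^{\rm odd}$ as the top degree would identify the wrong leading monomial (the inequality $\ge 9$ survives either way, but the exact dimension statement would not).
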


\begin{proof} Denote  components of the filtration on $A$ by $A_n$: $A=\bigoplus\limits_{n=0}^{\infty} A_n$. Here $A_n$ stands for polynomials (series) with the (lower) degree $n$, $A_0=\k$. We can consider associated graded algebra $A_{gr}=\prod\limits_{n=0}^{\infty}A_n/A_{n+1}$ where the $n$th graded component consists of series starting in degree exactly $n$. The dimensions of $A_{gr}$ and $A$ do coincide and are equal to $\sum\limits_{n=0}^{\infty} (A_n/A_{n+1})$.
 Obviously $A_1/A_2={\rm span} \langle x,y,\rangle_{\k}$, since the potential is of degree 3 and there are no relations  on monomials of degree smaller than two. The relations  in degree two are $\partial_y F_3= x^2+...$ and $ \partial_x F_3= yx+xy+...  $. Thus, modulo this ideal any occurrence of $x^2$ can be reduced to sum of monomials of higher degree, and $xy$ can be changed to $-yx$  plus sum of monomials of higher degree.

This means that the linear basis in $A_2/A_3$ consists of $x^2, yx$, so $\deg A_2/A_3=2 $

Let us call {\it n-normal monomials} those monomials  of degree $n$, which does not contain as submonomials elements of linear basis of $A_{n-1}/A_n$.

Consider now 3-normal monomials, that is those monomials of degree 3, which contain as a submonomials only already reduced monomials of degree 2: $x^2$ and $xy$. There are two of them: $x^3$ and $yx^2$. Thus the dimension of $A_3/A_4$ can be 2 or smaller, if there are relations on these monomials coming from  the ideal generated by the potential. But in this case there are no such relations, because the relations  on the level of degree 2, that is relations $y^2=0, xy=-yx$ with the ordering $x>y$ form a Gr\"obner basis. Thus $\dim A_3/A_4$=2.

Consider now $A_4/A_5$, 4-normal monomials are $x^4, yx^3$. Look how we can get dependances between them out of relations. We would multiply $\partial_x F$ and $\partial_y F$ from the right and from the left by a linear expressions on $x,y$, or from one side by a quadratic expression. We can not get new relations on monomials of degree 4 this way, again because quadratic parts of  $\partial_x F$ and $\partial_y F$ form a Gr\"obner basis.
However we can get new relations on terms of degree 4 if we multiply $\partial_x F$ and $\partial_y F$ by linear terms from the right or from the left, and terms of degree 3 cancel. Then the relation we get will be of degree 4, so it might create dependance between $x^4$ and $yx^3$.
We want to find out how many of such constrains we can get, to know the dimension of $A_4/A_5$.  For that  we consider two possible ambiguities:
$x^3=x^2\cdot x = x \cdot x^2 $ and $ x^2y= x \cdot xy =x^2 \cdot y $ between $\partial_x F_3= xy+yx$ and $\partial_y F_3=x^2$. The resolution of these two ambiguities  leads to two elements, generating the space $\Omega$ of monomials with zero third component, spanned by $u_1 \cdot \partial_x F_3, \partial_x F_3 \cdot u_2, v_1 \cdot \partial_y F_3, \partial_y F_3 \cdot v_2$,
with  $u_i,v_i$ - linear polynomials.

In this case resolution of ambiguity $x^2y$ will give the relation $[\partial_x F, x]+ [\partial_y F,y]$, which as we know already present in $\Omega$, as it is a syzygy between the relations \ref{syz}. Thus only the ambiguity $x^3$ can bring new relation. In other words, two relations we got from the ambiguities are linearly dependant.
Thus the space $\Omega$ is at most one dimensional and $\dim A_4/A_5$ can be only one or two.

If it is two, then the overall dimension of $A$ we got already is $\sum\limits_{i=0}^{4} \dim A_i/A_{i+1}= 1+2+2+2+2=9$.

If it is one, then we will look at dimension of $A_5/A_6$. It is shown by brute force calculation, that it  is at least one.
Then the overall dimension  we got up to this step is $\dim A= \sum\limits_{i=0}^{5} \dim A_i/A_{i+1}= 1+2+2+2+1+1=9$.
\end{proof}

Let us now formulate a more general fact, which in particular will show that the dimension in this case is $\geq 9$, but also will be used later in classification of degree 9.

First let us note the following

\begin{lemma}\label{zamx2y} Let $A=\kx / id(F)$, where the potential $F=x^2 y\ww+ F_4+...+F_r$, then by composition of invertible changes of variables of the form $x \to x, y\to y+f(x,y)$, or $y \to y, x\to x+g(x,y)$
 where $f(x,y),g(x,y)$ are polynomials of degree $\geq 2$, we can bring the potential to the shape $F=x^2y\ww+y^4p(y)$, for a polynomial $p(y)$.
\end{lemma}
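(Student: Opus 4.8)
\textit{Proof proposal.} The plan is to normalise the potential degree by degree: at the $n$-th stage I would apply a change of variables $x\mapsto x+g$, $y\mapsto y+f$ with $g,f$ homogeneous of degree $n-2$, chosen to kill every monomial of degree $n$ except the pure power $y^n$. Note first that any such change of variables is automatically invertible on $\kx$, since it is the identity modulo monomials of degree $\ge n-1$ and hence unitriangular in the degree filtration (this is the completed analogue of Proposition~\ref{prop3}, and needs no nilpotency hypothesis); by Proposition~\ref{prop2} it acts on the potential as it does on the relations, and it clearly leaves the degree $\le 2$ data alone, so the cubic part of $F$ remains $x^2y\ww$.

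I would argue by induction on $n\ge 4$, the inductive hypothesis being that finitely many such changes have brought $F$ to the form $x^2y\ww+\sum_{j=4}^{n-1}c_jy^j+F_{\ge n}$, where $F_{\ge n}$ gathers the terms of degree $\ge n$; the base case $n=4$ is the hypothesis. Write $F_n$ for the degree-$n$ part of $F_{\ge n}$. Performing $x\mapsto x+g$ then $y\mapsto y+f$ with $g,f$ homogeneous of degree $n-2$ does not disturb the components of degree $<n$, and the only new contribution to the degree-$n$ component, besides $F_n$ itself, comes from substituting into the cubic term; working modulo $[\kx,\kx]$ (legitimate, since $\partial_x,\partial_y$ descend to cyclic polynomials in the Ginzburg picture) this contribution is, up to the harmless factor $3$,
$$[xyg]+[xgy]+[x^2f],$$
where $[w]$ is the cyclic class of a word $w$ — this is simply the part of $x^2y\ww$, evaluated at $(x+g,y+f)$, that is linear in $(g,f)$. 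Thus it is enough to prove: if $V_n$ denotes the space of degree-$n$ cyclic polynomials and $W_n\subseteq V_n$ the span of all $[xyg]+[xgy]$ and all $[x^2f]$ with $g,f$ words of degree $n-2$, then $W_n$ is exactly the span of the cyclic monomials of degree $n$ that are not a power of $y$, so that $V_n=W_n\oplus\K\,[y^n]$. Granting this, choose $g,f$ so that $[xyg]+[xgy]+[x^2f]$ is the $W_n$-part of $-F_n$; this turns $F_n$ into a multiple of $y^n$, completing the induction step.

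For the claim, $[y^n]\notin W_n$ is clear, as every generator of $W_n$ involves only cyclic monomials containing an $x$. Conversely, a cyclic monomial of degree $n$ with at least one $x$ that contains $x^2$ as a subword is visibly $[x^2f]$; one with no $x^2$ subword is $[xy^{a_1}xy^{a_2}\cdots xy^{a_k}]$ with $k\ge 1$ and all $a_i\ge 1$, and for $k=1$ equals $\tfrac12([xyg]+[xgy])$ with $g=y^{n-2}$, while for $k\ge 2$ a short downward induction on $a_1$ via
$$[xy\,(y^{a_1-1}xy^{a_2}\cdots xy^{a_k})]+[x\,(y^{a_1-1}xy^{a_2}\cdots xy^{a_k})\,y]=[xy^{a_1}xy^{a_2}\cdots xy^{a_k}]+[xy^{a_1-1}xy^{a_2}\cdots xy^{a_k+1}]$$
does it, the second term on the right having strictly smaller $a_1$ or, when $a_1=1$, containing $x^2$. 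I expect this verification — that the generators of $W_n$ are ``triangular'' enough to reach every cyclic monomial other than $[y^n]$ — to be the main, though entirely elementary, obstacle; the only care needed is with cyclic identifications like $[xy^axy^b]=[xy^bxy^a]$ and with dividing by the scalars $2$ and $3$ that occur, so $\operatorname{char}\K\ne 2,3$ is assumed.

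Finally, iterating over $n=4,5,\dots$ yields a sequence of changes of variables whose composition converges to an automorphism $\Phi$ of $\kx$, since its $n$-th factor is the identity modulo degree $n-1$; by construction $\Phi(F)=x^2y\ww+\sum_{j\ge4}c_jy^j=x^2y\ww+y^4p(y)$. A priori $p(y)=\sum_{j\ge0}c_{j+4}y^j$ is a power series; in the finite-dimensional case $A$ is nilpotent by Proposition~\ref{prop1}, so $y$ is nilpotent in $A$ and $p(y)$ may be truncated to a polynomial without affecting the potential algebra, giving the statement.
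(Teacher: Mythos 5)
Your proposal is correct and follows essentially the same route as the paper: eliminate, degree by degree, every monomial except the pure powers of $y$ by substitutions $x\to x+g$, $y\to y+f$ acting at linear order on the cubic term $x^2y\ww$. The difference is only one of completeness — where the paper lists the killable terms in degrees $4$ and $5$ and writes ``etc.'', you prove the general degree-$n$ spanning claim for the cyclic classes $[xyg]+[xgy]$, $[x^2f]$, and you also address the invertibility/convergence of the infinite composition and the truncation of $p(y)$ to a polynomial via nilpotency, points the paper leaves implicit.
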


\begin{proof}
Let us list the changes of variables and the corresponding terms of the potential, which could be killed by them:

$$y\to y+cx^2 --- x^4, y\to y+cxy --- x^3y\ww, y \to y+c y^2 --- x^2y^2\ww$$
$$x\to x+cy^2 --- xy^3\ww, x\to x+cxy --- xyxy\ww, y \to y+c y^2 --- x^2y^2\ww.$$

Only monomial $y^4$ will remain in the potential in degree 4.

$$y\to y+cx^3 --- x^5, y\to y+cx^2y --- x^4y\ww, y \to y+c xy^2 --- x^3y^2\ww,$$
$$y\to y+cyxy --- x^2yxy\ww, y\to y+cy^3 --- x^2y^3\ww,$$
$$x\to x+cyx^2 --- x^2yxy\ww, x\to x+cyxy --- xyxy^2\ww, y \to y+c y^3 --- xy^4\ww.$$

Only monomial $y^5$ will remain in the potential in degree 5, etc.

Thus, after composition of such changes of variables we get a potential $F=x^2y\ww+y^4p(y)$, for some polynomial $p(y)$.

\end{proof}

\begin{proposition}\label{gen}
Let $A=\kx / id(F)$, where the potential $F=x^2 y\ww+ p(y)$, $p(y)$ is the polynomial of degree $k$, with even part of degree $2n$. Then the linear basis of $A$ consists of monomials $y^N, xy^{N-1}$ for $n={0,..., 2n+3},$ and $y^N$ for $N = 2n+4,..., 2n+k+5$.
\end{proposition}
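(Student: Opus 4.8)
The plan is to pin down $A$ as a module over $\K[[y]]$ and then count, using an S‑polynomial argument only to check that no relations beyond the obvious ones survive.

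\emph{Step 1 (the defining relations and the module structure).} Since $x^2y\ww$ is cyclically invariant and $p=p(y)$ involves only $y$, one computes, up to nonzero scalars, $\partial_xF=xy+yx$ and $\partial_yF=x^2+q(y)$, where $q(y):=\partial_yp(y)$ is a polynomial in $y$ alone; write $s(y)$ for twice the odd‑degree part of $q(y)$. In $A$ the relation $\partial_xF=0$ gives $xy=-yx$, hence $xy^{j}=(-1)^{j}y^{j}x$ for all $j$, so every word equals $\pm y^{a}x^{b}$; and $\partial_yF=0$ gives $x^{2}=-q(y)$, which lets us take $b\le 1$. Thus $A=\K[[y]]\cdot 1+\K[[y]]\cdot x$ is a left $\K[[y]]$‑module generated by $1$ and $x$, spanned over $\K$ by $\{y^{N}\}_{N\ge0}\cup\{y^{N}x\}_{N\ge0}$.

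\emph{Step 2 (the hidden relations, and that there are no others).} Subtracting $x\cdot\partial_yF$ from $\partial_yF\cdot x$ and using $xy^{j}=(-1)^{j}y^{j}x$ gives $[q(y),x]=\sum_{j}q_{j}\bigl(1-(-1)^{j}\bigr)y^{j}x=s(y)x$, so $s(y)x=0$ in $A$; multiplying once more by $x$ and using $x^{2}=-q(y)$ yields $s(y)q(y)=0$ in $A$. The key claim is that these, together with the original two, are \emph{all} the relations. I would verify this by running Buchberger's procedure on $\{\,xy+yx,\ x^{2}+q(y),\ s(y)x,\ s(y)q(y)\,\}$ in a degree‑compatible order adapted to the completion: the overlap $x^{2}y$ of the first two reduces to the cyclic‑potential syzygy of Lemma~\ref{syz} and so to $0$; the overlap $x^{3}$ produces precisely $s(y)x$; the overlap of $s(y)x$ with $x^{2}+q(y)$ produces precisely $s(y)q(y)$; and every remaining overlap (of $s(y)x$ with $xy+yx$, of $s(y)q(y)$ with each of the other three, and the self‑overlaps) reduces to $0$, because all ambiguous monomials lie in $\K[[y]]$ or $\K[[y]]x$, where everything commutes and the relations $s(y)x$, $s(y)q(y)$ finish the reduction. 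Equivalently: the submodule $\K[[y]]x$ of $A$ is annihilated exactly by the ideal $(s(y))$ of $\K[[y]]$, and $\K[[y]]\cdot1$ exactly by $(s(y)q(y))$.

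\emph{Step 3 (counting).} For nonzero $f\in\K[[y]]$ one has $\K[[y]]/(f)\cong\K[[y]]/(y^{v(f)})$, with $v$ the $y$‑adic valuation, since the unit part of $f$ is invertible in $\K[[y]]$. Hence the ``$x$‑component'' of $A$ has dimension $v(s)$ and the ``$1$‑component'' has dimension $v(s)+v(q)$. Reading $v(s)$ and $v(q)$ off the normal form of the potential produced by Lemma~\ref{zamx2y} — with $q$ the $y$‑derivative of the $y$‑tail of $F$ and $s$ the $y$‑derivative of its even part — gives $v(s)=2n+3$ and $v(s)+v(q)=2n+k+6$. Thus the monomials irreducible modulo the Gröbner basis are exactly $y^{N}$ for $0\le N\le 2n+k+5$ and $y^{N}x$ (equivalently $xy^{N}$) for $0\le N\le 2n+2$; grouping the indices $N\le 2n+3$ together gives the asserted basis, and $\dim A=(2n+3)+(2n+k+6)=4n+k+9$.

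The laborious step is the completeness check in Step 2 — showing every S‑polynomial reduces to $0$. Its only real subtlety is that some reductions are infinite (a monomial $y^{M}x$ is rewritten by $s(y)x$ to higher monomials, and so on), so one must work inside the completion $\kx$ and use convergence in the filtration topology rather than termination of rewriting; equivalently, one argues directly that the two cyclic $\K[[y]]$‑modules of Step 1 carry no relations beyond those of Step 2. A minor extra point of care is the bookkeeping in Step 3 that turns $v(s)$ and $v(q)$ into the exponents $2n+3$ and $2n+k+6$ via the precise reduced form of Lemma~\ref{zamx2y}.
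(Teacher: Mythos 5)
Your proposal is correct and follows essentially the same route as the paper: a Buchberger/Gr\"obner computation on $xy+yx$ and $x^2+q(y)$ in which the $x^3$ overlap yields $s(y)x=0$ (the paper's $y^3p_{even}(y)x=0$, i.e.\ $y^{2n+3}x=0$ after stripping an invertible unit) and its overlap with $x^2+q(y)$ yields $s(y)q(y)=0$ (the paper's $y^{2n+k+6}=0$), after which the normal monomials are read off. Your $\K[[y]]$-module and valuation packaging is only a cosmetic rephrasing of the paper's use of the invertibility of $1+u(y)$, and your reading of the statement through the normal form of Lemma~\ref{zamx2y} matches the paper's intended interpretation.
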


\begin{proof} Let us construct a Gr\"obner bases on two relations we have from  the potential:
$$\partial F_x: x^2=y^3 p(y), \partial_y F: xy=-yx.$$

The ambiguity $x^2y $ is resolvable. Indeed, $x^2y \to y^3 p(y)y+xyx\ar-yx^2\ar - y^4p(y)=0$. The ambiguity $x^3$ reduces to $xy^3p(y)-y^3p(y)x=-2y^3 p_{even}(y) x=0$, where $p_{even}(y)$ stands for the even part of the polynomial $p(y)$, since due to the relation  $xy=-yx$, $xy^n=y^nx $, if $n$ is even, and $xy^n=-y^nx$, if $n$ is odd.
Thus from the above we got new relation $$ y^3 p_{even}(y)x=0 .$$

There are no other ambiguities between old relations. Note that in particular this implies that if $p(y)$ is odd, then the algebra is infinite dimensional.

Denote
$$
p_{even}(y)=c y^{2n}+..., n=0,1,..., c\neq 0,
$$
then we can rewrite the latter relation $ y^3 p_{even}(y)x=0 $ as $c y^3y^{2n}(1+u(y))x=0.$
Since $1+u(y)$ is invertible and (anti-)commute with $x$, we have the new relation
$$
y^{2n+3}=0.
$$

Now consider new ambiguities formed by this relation with the old ones.
The ambiguity $xy^{2n+3}x^2$ is resolvable. The ambiguity $$y^{2n+3}x \to y^{2n+6} \to c' y^{2n+6+k}(1+v(y)),$$
where $k$ is degree of $p(y): \,Q p(y)=c' y^k+...$, and $k$ odd. Since $1+v(y)$ is invertible, we have a relation
$$
y^{2n+k+6}=0.
$$
This relation does not produce any unresolvable ambiguities. Thus the Gr\"obner basis consists of
$$
x^2-y^3p(y)=0,\ \  xy+yx,y^{2n+3}=0,\ \   y^{2n+k+6}=0
$$
   and by this the linear basis of algebra is determined. Namely it consists of
    monomials which does not  contain as submonomials leading terms of elements of Gr\"obner basis ($x >y$).

Thus the linear basis of $A$ is:    $y^N, xy^{N-1}$ for $n={0,..., 2n+3},$ and $y^N$ for $N = 2n+4,..., 2n+k+5$.
\end{proof}

We can calculate then the dimension of the algebra, when potential has a cubic term $x^2y\ww.$ Just by counting elements of linear basis described in the proposition, we deduce the following.

\begin{corollary}\label{1}
  If $k=2n,$ $\dim A =3(2n+3)$, if $k<2n, \dim A= 4n+k+9$ ($k$ odd), where $k$ is degree of polynomial $p(x)$, and $2n$ is degree of its even part.
\end{corollary}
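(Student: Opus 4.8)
The plan is to read off Corollary~\ref{1} directly from the explicit linear basis produced in Proposition~\ref{gen}: the dimension of $A$ is simply the number of monomials in that basis, so the task reduces to enumerating them and adding up the sizes of the pieces, keeping the two regimes $k=2n$ and $k<2n$ (with $k$ odd) apart because that is how the Gr\"obner basis of Proposition~\ref{gen} is organised --- its two ``all $y$'' relations record, respectively, where the $x$-carrying words terminate and where the pure powers of $y$ finally vanish, and these cut-offs shift between the two regimes. No new ideas beyond Proposition~\ref{gen} (together with Lemma~\ref{zamx2y}, which has already put $F$ in the normal form $x^2y\ww + y^4 p(y)$) are needed; the argument is pure bookkeeping.

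Concretely I would proceed as follows. First, fix the three families of reduced words listed in Proposition~\ref{gen}: the pure powers $y^0, y^1, \dots, y^{2n+3}$, which contribute $2n+4$ elements; the words carrying a single letter $x$, namely $x, xy, \dots, xy^{2n+2}$ (equivalently $\pm y^m x$ for $0 \le m \le 2n+2$), which contribute $2n+3$ elements; and the ``tail'' of pure powers $y^{2n+4}, \dots, y^{2n+k+5}$, which contributes $(2n+k+5)-(2n+4)+1 = k+2$ elements. Second, note that these three sets are pairwise disjoint --- the middle family is the only one containing $x$, and the first and third families occupy the disjoint exponent ranges $\{0,\dots,2n+3\}$ and $\{2n+4,\dots,2n+k+5\}$ --- so that
\[
\dim A \;=\; (2n+4) + (2n+3) + (k+2) \;=\; 4n+k+9 .
\]
Third, specialise: for $k<2n$ this is the stated value $4n+k+9$, while substituting $k=2n$ gives $4n+2n+9 = 6n+9 = 3(2n+3)$, the other stated value. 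Thus both clauses of the corollary are instances of the single count above, and it only remains to check that the tail indeed stops at $2n+k+5$ and the $x$-carrying words indeed stop at $xy^{2n+2}$; but these endpoints are exactly what the last two Gr\"obner relations of Proposition~\ref{gen} encode.

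The step that would need the most care --- although it is bookkeeping rather than a genuine obstacle --- is the boundary between the two regimes: when $k=2n$ the top pure-power relation of Proposition~\ref{gen} has even exponent $2n+k+6 = 4n+6$, so the step there that isolates it (carried out for $k$ odd) must be re-examined in this degenerate case, and one must verify that the bound it gives on the pure powers of $y$, hence the length $k+2 = 2n+2$ of the tail, is the same. I would also take care to carry along any characteristic hypotheses implicit in Proposition~\ref{gen} (the scalars occurring when $xy+yx$, $x^2 - y^3 p(y)$ and the two derived relations are normalised): these do not affect the count of basis monomials, but the Gr\"obner-basis derivation uses their invertibility, so they should be recorded.
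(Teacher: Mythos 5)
Your proposal is correct and is essentially the paper's own argument: the paper proves Corollary~\ref{1} simply by counting the monomials in the linear basis of Proposition~\ref{gen}, and your enumeration $(2n+4)+(2n+3)+(k+2)=4n+k+9$, specialising to $3(2n+3)$ when $k=2n$, is exactly that count. The extra care you flag about the $k=2n$ boundary and the normalising scalars is reasonable but concerns Proposition~\ref{gen} itself rather than this corollary.
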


This corollary, combined with the Lemma~\ref{zamx2y} provides the minimal possible dimension in case of the potential with cubic term $x^2y\ww$.

\begin{corollary}\label{2} Minimal possible dimension of $A(F)$, where $F=x^2y\ww+F_4+...+F_r$, is 9.
\end{corollary}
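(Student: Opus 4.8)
The plan is to feed the normal form supplied by Lemma~\ref{zamx2y} into the explicit dimension formula of Corollary~\ref{1} and then minimise over the only remaining parameter, the polynomial $p$.

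First I would note that since $A$ is assumed finite dimensional it is nilpotent by Proposition~\ref{prop1}; hence the changes of variables $x\mapsto x,\ y\mapsto y+f(x,y)$ and $y\mapsto y,\ x\mapsto x+g(x,y)$ with $\deg f,\deg g\ge 2$ occurring in Lemma~\ref{zamx2y} are invertible (Proposition~\ref{prop3}), act on the defining relations exactly as on the potential (Proposition~\ref{prop2}), and therefore preserve $\dim A$. So without loss of generality $F=x^2y\ww+y^4p(y)$. If $p=0$ the only relations are $x^2=0$ and $xy=-yx$, all powers $y^N$ survive, and $A$ is infinite dimensional; and, as observed inside the proof of Proposition~\ref{gen}, if $p$ has vanishing even part then $A$ is again infinite dimensional. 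Both possibilities are excluded by the finite dimensionality hypothesis, so $p\neq 0$ and its even part is nonzero; write $k=\deg p$ and $2n=\deg p_{\mathrm{even}}$, so that $0\le 2n\le k$.

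Now Corollary~\ref{1} gives the dimension. If the leading term of $p$ is even, so that $k=2n$, then $\dim A=3(2n+3)\ge 9$, with equality exactly when $n=0$, that is when $p$ is a nonzero constant. If the leading term of $p$ is odd, so that $k$ is odd and $2n<k$, then $k\ge 1$ and $n\ge 0$, whence $\dim A=4n+k+9\ge 10$. In either case $\dim A\ge 9$.

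It remains to see that the bound is attained, which it is for $F=x^2y\ww+y^4$: here $p\equiv 1$, $k=2n=0$, and Corollary~\ref{1} yields $\dim A=3(0+3)=9$. Hence the minimal possible dimension of $A(F)$ for $F=x^2y\ww+F_4+\dots+F_r$ equals $9$. The argument is essentially bookkeeping; the only point requiring a little care is checking that the degenerate choices of $p$ (identically zero, or with no even part) cannot give a finite dimensional algebra, so that Corollary~\ref{1} legitimately applies, together with the observation that the normalising substitutions leave the dimension unchanged --- both of which follow immediately from the earlier results.
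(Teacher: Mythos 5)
Your proposal is correct and follows essentially the same route as the paper: normalise the potential to $x^2y\ww+y^4p(y)$ via Lemma~\ref{zamx2y} (with invertibility and dimension-preservation justified as you do), then minimise the dimension formula of Corollary~\ref{1} over $p$ and note attainment at $p\equiv 1$. The only divergence is in the odd case, where you read $k$ and $2n$ as top degrees (getting a bound of $10$) while the paper's convention makes $k$ the lowest degree, giving its stated minimum $14$; either way the bound exceeds $9$, so the conclusion is unaffected.
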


\begin{proof} In case $p(y)$ has even degree ($k=2n$), the minimal possible dimension is 9 (corresponds to $n=k=0$). In case $p(y)$ has odd degree
($k <2n$), the minimal possible dimension is 14 (corresponds to $n=k=1$).
\end{proof}

\subsection{Potential with cubic term $x^3$}\label{x3}

Let us consider now potential with cubic term $x^3$. This one provides the fastest growth of the three.

\begin{lemma}\label{x3}
Let $A=\kx/ id_{\kx} (\partial_x F, \partial_y F)$
 for the potential $F \in \k\langle x,y \rangle,
 F=x^3 +F_4+...+F_r.$ Then $\dim A \geq 10$.
\end{lemma}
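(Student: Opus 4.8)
<br>

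The plan is to mimic the structure of the proof for the $x^2y\ww$ case: compute the associated graded algebra $A_{gr}=\prod_n A_n/A_{n+1}$ degree by degree, bounding $\dim A_n/A_{n+1}$ from below using a Gr\"obner-basis analysis of the relations, until the accumulated dimensions reach $10$. First I would write down the degree-$2$ parts of the relations coming from $F_3=x^3$: we have $\partial_x F_3 = x^2$ and $\partial_y F_3 = 0$. So in degree $2$ there is exactly one relation, $x^2=0$ (modulo higher-degree terms), and the degree-$2$ part of the ideal is one-dimensional. Hence $A_1/A_2=\spann\langle x,y\rangle$, which is $2$-dimensional, and $A_2/A_3$ is spanned by $xy,yx,y^2$, which is $3$-dimensional (the only degree-$2$ monomial killed is $x^2$, and there is no ambiguity to create further relations).

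Next I would pass to degree $3$. The $3$-normal monomials (those not containing $x^2$ as a submonomial) are $xyx, xy^2, yxy, y^2x, y^3$ — five of them; $xyx$ and $x y^2$ both avoid $x^2$, and so do the others, while $x^3, x^2y, yx^2$ are excluded. New relations in degree $3$ can only come from multiplying $\partial_x F = x^2 + (\text{higher})$ and $\partial_y F = (\text{higher})$ left and right by linear terms so that the degree-$2$ leading terms cancel; but the only degree-$2$ leading term present is $x^2$ (from $\partial_x F$), which cannot cancel against anything since $\partial_y F$ has no degree-$2$ term, so no genuine degree-$3$ relations appear. Therefore $\dim A_3/A_4 = 5$. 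Already $\sum_{i=0}^{3}\dim A_i/A_{i+1} = 1+2+3+5 = 11 \geq 10$, so I only need degrees up to $3$ and in fact get a strictly stronger bound; to land exactly on $\geq 10$ I can even stop one degree earlier and argue $\dim A_3/A_4\geq 4$, which is cheaper. One caveat: I must make sure $A_3/A_4$ is really nonzero of the claimed size, i.e. the relevant monomials are genuinely independent in the graded algebra; this follows because the single degree-$2$ relation $x^2=0$ together with $x>y$ is already a (partial) Gr\"obner basis with no overlaps, so no reductions collapse the $3$-normal monomials.

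The main obstacle, as in the $x^2y\ww$ lemma, is the bookkeeping of which higher-degree relations can be produced by cancellation of leading terms and whether any ambiguity (overlap) forces an unexpected collapse in the degrees I rely on. Here, however, the situation is easier than the $x^2y\ww$ case precisely because $\partial_y F$ starts in degree $\geq 3$ and $\partial_x F$ has the single monomial leading term $x^2$: the overlaps of $x^2$ with itself give the ambiguity $x^3 = x^2\cdot x = x\cdot x^2$, whose resolution lives in degree $\geq 3$ but only involves the tails $F_4+\dots$, so it cannot reduce the count of $2$- or $3$-normal monomials below what I claimed. Thus the inequality $\dim A = \sum_n \dim A_n/A_{n+1} \geq 1+2+3+4 = 10$ follows, and in fact $\dim A\geq 11$; it suffices to record $\dim A\geq 10$ as stated.
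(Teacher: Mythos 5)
There is a genuine gap in your degree-$3$ count. You treat the only possible sources of degree-$3$ relations as products of $\partial_x F$ and $\partial_y F$ with linear terms in which the degree-$2$ leading terms cancel, and conclude that ``no genuine degree-$3$ relations appear'', so $\dim A_3/A_4=5$ and $\dim A\geq 11$. But you have forgotten that $\partial_y F$ is itself a generator of the ideal, and for $F=x^3+F_4+\dots+F_r$ its lowest-degree component is $\partial_y F_4$, which has degree $3$ and is generically a nonzero combination of degree-$3$ monomials. After reducing any occurrence of $x^2$, this single generator can impose one linear dependence among your five $3$-normal monomials $xyx, xy^2, yxy, y^2x, y^3$ (take e.g.\ $F=x^3+xy^3\ww+\dots$, where $\partial_y F$ begins with $y^2x+yxy+xy^2$). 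So the exact value $5$, and hence the claim $\dim A\geq 11$, are wrong in general. This is precisely the point the paper's proof addresses: it notes that $\partial_y F$, having a term of (lower) degree $3$, is the only relation that can create a dependence among the $3$-normal monomials, and therefore concludes $\dim A_3/A_4\geq 5-1=4$, giving $\dim A\geq 1+2+3+4=10$.

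Your parenthetical fallback ``I can even stop one degree earlier and argue $\dim A_3/A_4\geq 4$'' would indeed rescue the stated bound, but as written it has no justification in your argument --- in fact it is inconsistent with your own claim that no degree-$3$ relations exist, since if that claim were right you would not need to give up a dimension. To make the proof correct, replace the cancellation-of-leading-terms discussion by the observation that the degree-$3$ component of the ideal, modulo monomials containing $x^2$, is spanned by (at most) the single element $\partial_y F_4$ reduced by $x^2$, because $u\cdot\partial_x F$ and $\partial_x F\cdot u$ with $u$ linear only rewrite monomials containing $x^2$, and the overlap $x^3=x^2\cdot x=x\cdot x^2$ resolves in degree $\geq 4$. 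Then $\dim A_3/A_4\geq 4$ follows, and the rest of your computation ($A_1/A_2$ and $A_2/A_3$, which are handled exactly as in the paper) gives the bound $\dim A\geq 10$.
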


\begin{proof} As in the previous lemma $A_1/A_2=span <x,y>_{\K}$. Since $\partial_x F =x^2+F_3+F_4+..+F_r, \partial_y F= F_3+F_4+...+F_k$,  the $A_2/A_3 =span<xy, yx,y^2>_{\K} $ consists only of monomials without submonomial $x^2$. Then $A_3/A_4$ spanned by 2-normal monomials $y^3, xy^2, y^2x, xyx, yxy$, the relation $\partial_y F$ which has term  of (lower) degree 3 is the only one which can create  one linear dependance between them $\mod A_4$. Thus $\dim A_3/A_4 \geq 4$, and we get $\dim A \geq 1+2+3+4=10$.
\end{proof}

\subsection{Potential with cubic term $x^3+y^3$}\label{x3y3}

Now we consider the case of potential with $F_3=x^3+y^3$, which can provide algebras of dimension 8.
Our goal will be to prove the following.

\begin{theorem} There is only one up to isomorphism algebra $$A=\kx/ id_{\kx} (\partial_x F, \partial_y F)$$ which has dimension 8. It is the algebra given by the potential
$F=x^3+y^3 +xyxy\ww$.
\end{theorem}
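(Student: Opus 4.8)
The plan is to reduce the classification of 8-dimensional potential algebras with cubic term $x^3+y^3$ to a finite computation by first normalizing the higher-degree part of the potential via admissible changes of variables, and then carrying out a Gröbner basis analysis of the resulting relations analogous to Proposition~\ref{gen}.

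\textbf{Step 1: Normalization of the potential.} Starting from $F = x^3+y^3+F_4+\dots+F_r$, I would apply changes of variables of the form $x\to x+g(x,y)$, $y\to y+f(x,y)$ with $g,f$ of degree $\geq 2$ (invertible by Proposition~\ref{prop3} once we know the algebra is finite-dimensional, which is our standing hypothesis). The point is that $\partial_x(x^3) = x^2$ and $\partial_y(y^3)=y^2$ have the squares of both generators as leading terms, so a change of variable $x\to x+m(x,y)$ modifies $F_3$ by $3x^2m+3xm x+3mx^2+\dots$, and similarly for $y$. As in Lemma~\ref{zamx2y}, one lists which degree-$4$ (then degree-$5$, etc.) cyclic monomials can be absorbed by such substitutions. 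The monomials involving $x^2$ as a submonomial (in a cyclic sense) and those involving $y^2$ can be killed; what survives in each degree are the \emph{mixed} monomials built from alternating blocks, of which the shortest is $xyxy\ww$ in degree $4$. The main content of this step is to show that the potential can be brought to $F = x^3+y^3+\lambda\, xyxy\ww + (\text{terms of degree} \geq 5)$, and then that the degree-$\geq 5$ remainder can likewise be eliminated — or that the dimension count already forces it to be zero — leaving $F=x^3+y^3+\lambda\,xyxy\ww$, and finally that $\lambda$ can be rescaled to $0$ or $1$ by a diagonal change of variables $x\to \mu x$, $y\to \nu y$.

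\textbf{Step 2: Gröbner basis and dimension count.} For $F=x^3+y^3$ (the $\lambda=0$ case) the relations are $x^2=0$, $y^2=0$; this is already a Gröbner basis with no ambiguities ($x^2\cdot x$ versus $x\cdot x^2$ resolves trivially, likewise for $y$, and there is no overlap between $x^2$ and $y^2$), so the linear basis is $\{1,x,y,xy,yx,xyx,yxy,xyxy=yxyx,\dots\}$ — but $x\cdot yx\cdot y$ and $y\cdot xy\cdot x$ are not reducible, giving an \emph{infinite}-dimensional algebra. Hence $\lambda=0$ is excluded and we must have $\lambda=1$: $F = x^3+y^3+xyxy\ww$. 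Now $\partial_x F = x^2 + yxy + y^2x$ (cyclic derivative of $xyxy\ww$ with respect to $x$ picks out the two occurrences of $x$) and $\partial_y F = y^2 + xyx + x^2y$; with the order $x>y$ the leading terms are $x^2$ and $xyx$... here one has to choose the ordering carefully so that the Buchberger procedure terminates. Running Buchberger's algorithm on these two relations, resolving the ambiguities $x^3$, $x^2yx$, $xyx^2$, etc., should produce finitely many additional relations cutting the algebra down, and reading off the normal monomials gives $\dim A = 8$ with basis $\{1,x,y,xy,yx,xyx,yxy,xyxy\}$ (or the appropriate $8$ monomials).

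\textbf{Step 3: Uniqueness.} Since Step~1 shows every $F$ with cubic part $x^3+y^3$ giving a finite-dimensional algebra is equivalent, up to change of variables, to $x^3+y^3+xyxy\ww$, and since by Proposition~\ref{prop2} changes of variables in the potential induce isomorphisms of the potential algebras, there is exactly one such algebra up to isomorphism, and by Lemmas on the cubic terms $x^2y\ww$ and $x^3$ (where $\dim A\geq 9$ and $\geq 10$ respectively) this is the \emph{only} $8$-dimensional contraction algebra.

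\textbf{The main obstacle} will be Step~1: proving that \emph{all} the higher-degree contributions can be normalized away is a bookkeeping argument that must be done with care, because unlike the $x^2y\ww$ case (Lemma~\ref{zamx2y}) where only $y^4p(y)$ survives, here in each degree there is a nontrivial space of "resistant" mixed monomials, and one must argue that modulo lower-degree relations (which already include $x^2\equiv 0$, $y^2\equiv 0$ to leading order) these do not affect the algebra, or else that the $8$-dimensionality hypothesis directly forces all degree-$\geq 5$ terms of $F$ to be redundant. The secondary technical point is choosing a monomial ordering in Step~2 for which the Gröbner basis of the relations of $x^3+y^3+xyxy\ww$ is finite, so that the dimension count is rigorous rather than heuristic.
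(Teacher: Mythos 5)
Your overall strategy (normalize the potential by non-linear substitutions, then verify the dimension by a Gr\"obner basis computation) is in the spirit of the paper, but the two steps that carry the actual content are left unexecuted, and this is where the gaps lie. In Step~2 the cyclic derivatives are miscomputed: for $F=x^3+y^3+xyxy\ww$ one has $\partial_x F=x^2+2\,yxy$ and $\partial_y F=y^2+2\,xyx$ (up to the normalization of the cyclic sum); there is no $y^2x$ or $x^2y$ term. More importantly, the Buchberger run that is supposed to deliver $\dim A=8$ is only announced (``should produce finitely many additional relations''), so the proposal never actually establishes that the normal form is $8$-dimensional, nor which graded dimension vectors can occur. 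The paper settles exactly this point first and without any normalization: it counts $n$-normal monomials degree by degree for an \emph{arbitrary} potential with cubic part $x^3+y^3$, uses the syzygy $[x,\partial_xF]+[y,\partial_yF]=0$ (Lemma~\ref{syz}) to see that the ambiguities $x^3$ and $y^3$ give at most one independent relation in degree $4$, observes that $\dim A_4/A_5=2$ already forces $\dim A\geq 10$, while $\dim A_4/A_5=1$ gives $xyxy=\alpha\, yxyx$ and hence, since $x^2=y^2=0$ in low degree, $A_5/A_6=0$. Thus dimension $8$ is forced (and $9$ is impossible from this cubic part) before any discussion of higher terms of the potential, and uniqueness is reduced to normalizing only the degree-$4$ part, which the paper does with the five substitutions killing $x^4$, $x^3y\ww$, $x^2y^2\ww$, $xy^3\ww$, $y^4$.

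The second gap is the one you flag yourself but do not close, and your description of it is too optimistic. It is not true that only $xyxy\ww$ ``survives'' the substitutions: in every even degree $2k$ the alternating cyclic monomial $(xy)^k\ww$ contains neither $x^2$ nor $y^2$ as a cyclic submonomial, so it cannot be absorbed into the cubic part $x^3+y^3$ by any substitution $x\to x+m$, $y\to y+f$; the analogue of Lemma~\ref{zamx2y} therefore leaves an infinite family of resistant terms, and your Step~1 conclusion $F=x^3+y^3+\lambda\,xyxy\ww$ needs a separate argument (either absorbing $(xy)^k\ww$, $k\geq 3$, using the quartic term once $\lambda\neq 0$, with an iterative correction of lower degrees, or showing these terms do not change the $8$-dimensional quotient). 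Since your logic runs ``normalize first, then compute,'' the whole classification hangs on this unproved normalization, whereas the paper's order of argument avoids needing it. Finally, a small but real point: a diagonal substitution $x\to\mu x$, $y\to\nu y$ preserving $x^3+y^3$ has $\mu^3=\nu^3=1$ and so multiplies $\lambda$ only by a cube root of unity; to scale $\lambda\neq 0$ to $1$ you must also use that $F$ and $cF$ ($c\neq 0$) generate the same ideal, i.e.\ combine the substitution with a rescaling of the potential. Your exclusion of $\lambda=0$ (relations $x^2=y^2=0$ give the infinite-dimensional monomial algebra on alternating words) is correct and consistent with the paper's count.
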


\begin{proof} As before $A_1/A_2=\spann<x,y>_{\K}$. Since $\partial_x F_3=x^2, \partial_y F_3=y^2, A_2/A_3=\spann <xy, yx>_{\K}.$ The 3-normal monomials are $xyx, yxy$. The linear dependances   between them would not occur because $x^2 $ and $y^2$ form a Gr\"obner basis. Thus dimension of $A_2/A_3$ is 2.

Then $A_3/A_4 $ is spanned by 3-monomials $xyx, yxy$. The relation between them can not occur from generating monomials $x^2$ and $y^2$ of the ideal, which trivially form a Gr\"obner basis. Hence $\dim A_3/A_4 =2$.

Then $A_4/A_5$ is spanned by $xyxy, yxyx$. Let us see which relations on degree 4 monomials can we have. The relations   come from resolution of ambiguities $ x^3=x^2 \cdot x=x \cdot x^2$ and $y^3=y \cdot y^2 =y^2 \cdot y$.

These ambiguities produce polynomials $[\partial_x,x]$ and $[\partial_y,y]$. We know that they are linearly dependant, since there is a syzygy $[\partial_x,x]+[\partial_y,y]=0$ \ref{syz}, so we have maximum one relation. These both also can be zero. In the first case $\dim A_4/A_5 =1$, in the second, $\dim A_4/A_5 =2$. If $\dim A_4/A_5 =2$, then, $\dim A_5/A_6 $ is $2$ (if the same ambiguities do not produce nontrivial new relations of degree 5) or 1 (if these ambiguities do produce a relation of degree 5). In both cases whether $\dim A_5/A_6 $ is equal to 2 or 1, we already reached dimension 10 or 11. Thus the only interesting for us here case, which can lead to dimension 8, or potentially 9, is when $\dim A_4/A_5=1$.

We will see that, in fact, it is necessarily lead to $\dim 8$, and $\dim 9$ is impossible. Indeed, if we have one linear dependance between 4-normal monomials $xyxy$ and $yxyx$, then $xyxy=\alpha  yxyx$. If we consider now 5-normal monomials, they should be $xyxyx$ and $yxyxy$, but both of them on the other hand equal to $xyxyx=\alpha yxyxx=0$ and $yxyxy=\alpha yyxyx=0$, since $x^2=y^2=0$. Thus dimension of $A_5/A_6$ must be 0.

{\bf Remark}. As a consequence of this argument we see that dimension $ 9$ can not occur from the potential with cubic  part $x^3+y^3$.

Now we need to find out how many non-isomorphic algebras can have a potential of the type $F=x^3+y^3+F^4+...+F_r$.


 To prove that there is only one, we find changes of variables which preserve $F_3=x^3+y^3$ and  kill all terms in the potential of degree four but $xyxy\ww$.
Namely: $$x \to x+\alpha y^2 --- x^2y^2\ww, x \to x+\alpha xy --- x^3y\ww, x \to x+\alpha x^2 --- x^4,$$
$$y\to y+\alpha xy --- xy^3\ww,  y \to \alpha y^2 --- y^4.$$

Note that these changes of variables are all invertible transformations in $\kx$ (see Proposition~\ref{prop3}), thus we get isomorphic objects.
This proves that there exists only one up to isomorphism algebra of dimension 8, $F_3=x^3+y^3$, it is given by the potential $x^3+y^3+xyxy\ww$.
\end{proof}





\section{Classification in dimension 9}

As we have seen before the potential with cubic term $x^3+y^3$ can not give dimension 9 (see remark in previous section), and as shown in Section~\ref{x3} in the case of potential with cubic term $x^3$ the dimension is bigger then 10. Thus only the potential with cubic term $xy^2\ww$ can give dimension 9.

By Proposition~\ref{gen} we get dimension 9 only when $n=0$, that is the potential is $P=xy^2+y^4p(y)$, where $p(y)=1+c_1y+...+c_sy^s$.

Since we consider the case $\dim A=9$, we can see that $\dim A_6 =0$: the series of dimensions is $\dim A_0=1$, $\dim A_1=2$, $\dim A_3=2$, $\dim A_4=1$, $\dim A_5=1$:  $1+2+2+2+1+1=9$.

Thus the potential should have the form: $P=xy^2\ww+ y^4+c_2 y^5+c_3 y^6.$

Now notice the following. We can kill the term $y^6$  in the potential by the invertible change of variables:
$x \to x, y \to y+cy^3.$
Thus  the potential can be taken into the form:
$$P=xy^2\ww+ y^4+c y^5.$$

This gives two 9 dimensional algebras, depending on whether $c=0$ or $c\neq 0:$

$$P_1=x^2y\ww+y^4, P_2=x^2y\ww+y^4+cy^5, c\neq 0.$$

Of course, if $c\neq 0$ then by the scaling we can make $c=1$. So, finally the only thing remained to prove the classification of 9 dimensional algebras is the following.

\begin{theorem} Algebras given by the potentials
$$P_1=x^2y\ww+y^4, P_2=x^2y\ww+y^4+y^5$$
are not isomorphic:
$$A_1=\kx/id(\partial_x P_1, \partial_y P_1)\neq A_2=\kx/id(\partial_x P_2, \partial_y P_2).$$
\end{theorem}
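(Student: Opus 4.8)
\section*{Proof proposal for the final theorem}

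The plan is to show directly that there is no $\K$-algebra isomorphism $\phi\colon A_1\to A_2$; by Propositions~\ref{prop2} and \ref{prop3} this is equivalent to the non-existence of an invertible change of variables of $\kx$ carrying the relation ideal of $P_1$ onto that of $P_2$. First I would normalise the two presentations. Computing the noncommutative derivatives of $P_1,P_2$ and rescaling $x,y$ shows $A_1\cong\kx/id(xy+yx,\ x^2-y^3)$ and $A_2\cong\kx/id(xy+yx,\ x^2-y^3-y^4)$; a Gr\"obner basis computation in $\kx$ (cf.\ Proposition~\ref{gen}) shows that both $A_1$ and $A_2$ have the $9$-element monomial basis $1,x,y,yx,y^2,y^2x,y^3,y^4,y^5$, the multiplication in $A_2$ being governed by the rules $xy=-yx$, $x^2=y^3+y^4$, $y^3x=0$, $y^6=0$ (hence also, e.g., $xy^2=y^2x$ and $(yx)^2=-y^5$). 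Writing $\mathfrak{m}$ for the radical, $\mathfrak{m}^k$ is the span of the basis monomials of degree $\ge k$ and $\mathfrak{m}^6=0$. The only difference between the two algebras is the extra summand $y^4$ in the second defining relation of $A_2$ (produced from the term $y^5$ of $P_2$ by $\partial_y$), and the task is precisely to show that this summand cannot be removed by any change of generators.

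Now suppose $\phi\colon A_1\to A_2$ is an isomorphism and set $X=\phi(x),\ Y=\phi(y)\in A_2$. The map $\kx\to A_2$, $x\mapsto X$, $y\mapsto Y$ then factors through $A_1$, i.e.\ $XY+YX=0$ and $X^2=Y^3$ in $A_2$, and $\{X,Y\}$ is a basis of $\mathfrak{m}/\mathfrak{m}^2$. Write $X=\alpha x+\beta y+u$, $Y=\gamma x+\delta y+v$ with $u,v\in\mathfrak{m}^2$ and $\alpha\delta-\beta\gamma\ne 0$. Reducing $X^2=Y^3$ modulo $\mathfrak{m}^3$ gives $\beta^2 y^2\equiv 0$, hence $\beta=0$, so $\alpha\ne 0$ and $\delta\ne 0$; let $a_2,b_2$ be the coefficients of $y^2$ in $u,v$. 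Next I would expand both relations in $A_2$ modulo $\mathfrak{m}^5$ using the reductions above and compare coefficients of the basis monomials. The coefficients of $y^3$ and of $y^2x$ in $X^2=Y^3$ give $\alpha^2=\delta^3$ and $2\alpha a_2=\gamma\delta^2$, while the coefficient of $y^3$ in $XY+YX=0$ gives $\alpha\gamma+\delta a_2=0$; eliminating $a_2$ and using $\delta^3=\alpha^2$ produces $3\alpha^2\gamma=0$, so (as $\mathrm{char}\,\K\ne 2,3$ and $\alpha\ne 0$) $\gamma=0$, whence $a_2=0$, and the coefficient of $y^2x$ in $XY+YX=0$ then forces $b_2=0$. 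Finally the coefficient of $y^4$ in $X^2=Y^3$ is $\alpha^2+a_2^2$ on the left and $\delta\gamma^2+3\delta^2 b_2$ on the right, which with $\gamma=a_2=b_2=0$ gives $\alpha^2=0$, contradicting $\alpha\ne 0$. Hence no such $\phi$ exists.

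The bulk of the work — and the step I expect to be the main obstacle — is the honest computation of products in the filtered (not graded) algebra $A_2$ up to degree $5$ needed to produce the displayed scalar equations: one must keep track of the fact that monomials like $(yx)^2$ collapse two degrees below their word length, so that the associated graded picture is not enough (indeed ${\rm gr}\,A_1\cong {\rm gr}\,A_2$, both equal to $\K\langle x,y\rangle/(xy+yx,\,x^2,\,y^3x,\,y^6)$, so the obstruction is genuinely filtered). The conceptual point is to look at the right coefficient: once $XY+YX=0$ has forced the $x$-component $\gamma$ of $\phi(y)$, together with the auxiliary coefficients $a_2$ (from $\phi(x)$) and $b_2$ (from $\phi(y)$), to vanish, there is nothing left in $Y^3$ to match the $y^4$-term of $\alpha^2 x^2$ that the relation $x^2=y^3+y^4$ of $A_2$ forces into $X^2$. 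I should also record the mild hypothesis $\mathrm{char}\,\K\ne 2,3$ (in particular the argument applies over $\C$), and note that the same computation, run in the opposite direction, shows equally that $A_2\not\cong A_1$; thus $A_1$ and $A_2$ are genuinely non-isomorphic, completing the classification in dimension $9$.
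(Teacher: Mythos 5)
Your proposal is correct and follows essentially the same route as the paper: compute the derived relations and their Gr\"obner bases, identify the common $9$-element monomial basis $1,x,y,yx,y^2,y^2x,y^3,y^4,y^5$, then assume an isomorphism, expand the images of the generators with undetermined coefficients, impose the defining relations and reach a contradiction among the coefficients. Your organization (reducing modulo powers of the radical so that only $\alpha,\beta,\gamma,\delta,a_2,b_2$ need to be tracked, ending in $\alpha^2=0$) is a somewhat leaner bookkeeping of the same undetermined-coefficients computation the paper carries out in full, with only the cosmetic caveat that your sign normalization $x^2-y^3-y^4$ presupposes $\sqrt{-1}\in\K$, which is harmless over $\C$ and avoidable by keeping the paper's signs.
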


\begin{proof} We need to show that algebras $A_1=\kx /id (R_1)$ non-isomorphic to $A_2=\kx /id (R_2)$,
where
$$
R_1: \,\, xy+yx, x^2+y^3,
$$

$$
R_2:\,\, xy+yx, x^2+y^3+y^4.
$$
  Let us  consider corresponding algebras  $A'_1=\k \langle x,y \rangle /id (R_1)$, $A'_2=\k \langle x,y \rangle /id (R_2)$, and calculate their Gr\"obner bases (as an ideal in free algebra) for $x<y$. We get that the Gr\"obner bases of $R_1$ is:
 $$ xy+yx, x^2+y^3, x^3$$
 The first algebra is nilpotent, which implies $A'_1 =\overline {A'_1 }= A_1$.
 When we calculate the  Gr\"obner bases of $R_2$, we see that relation $y^6=y^7p(y)$ holds, thus $A_2=A'_2/(y^6)$.
So, we reduced the problem to isomorphism of two algebras:$A_1=\k \langle x,y \rangle / (R_1)$ and $A_2=\k \langle x,y \rangle / (R_2, y^6).$

Now looking at the Gr\"obner bases of both of them we see that linear bases for them consist of monomials:
$$A_1: 1, x, y, x^2, yx, y^2, yx^2, y^2x, y^2x^2$$

$$A_2: 1, x, y,  yx, y^2, x^2, yx^2, y^2x, y^3.$$

Another possible linear basis for both algebras is:
$$ 1, x, y,   y^2, yx, y^2x,  y^3, y^4, y^5.$$

Multiplication table in this basis for $A_1$ and $A_2$ are as follows.

In
$$A_1: x^2=-y^3-y^4, xy=-yx, xy^2=y^2x, xyx=y^4+y^5, $$
$$xy^2x=-y^5, xy^3=0, xy^4=0, xy^5=0, y^6=0,$$
$$y^3x=0, yx^2=-y^4-y^5, yxy^2x=0, y^2x^2=-y^5...$$

In
$$A_2: x^2=-y^3, xy=-yx, xy^2=y^2x, xyx=y^4, $$
$$xy^2x=-y^5, xy^3=0, xy^4=0, xy^5=0, y^6=0,$$
$$y^3x=0, yx^2=-y^4,  y^2x^2=-y^5...$$

Using this liner basis rewrite relations in a more convenient way:

$$A_1: x^2=-y^3, xy=-yx, xy^2=0$$

$$A_2: x^2=-y^3-y^4, xy=-yx, xy^3=0, x^3=0, y^6=0.$$

Look for isomorphism $\phi: A_1 \to A_2, \phi(x)=\tilde x, \phi(y)=\tilde y $
with undetermined coefficients:
$$\tilde x= a_1x+a_2 y+a_3    yx +a_4 y^2 +a_5 y^2x +a_6  y^3 +a_7 y^4 +a_8 y^5$$

$$\tilde y= b_1x+b_2 y+b_3    yx+b_4 yx^2 +b_5 y^2x +b_6  y^3 +b_7 y^4 +b_8 y^5.$$

Isomorphism means we should have:
$$\tilde x=- \tilde y^3-\tilde y^4, \tilde x \tilde y=-\tilde y \tilde x, \tilde x \tilde y^4=0, \tilde x^3=0, \tilde x^6=0.$$

From this we derive the following conditions on coefficients.
$$\tilde x^3=0 \Longrightarrow \tilde x^3 =a_2^3 y^3+a_1a_2^2y^2x+...=0 \Longrightarrow a_2=0.$$

Now $\tilde x^3=-3a_1^2a_4y^5  \Longrightarrow a_4=0, $ since $\tilde x \notin A_2^2 \Longrightarrow a_1\neq0.$

 Moreover,
 $$\tilde x\tilde y+\tilde y \tilde x=0 \Longrightarrow \tilde x\tilde y+\tilde y \tilde x=-2a_1b_1 y^3+... \Longrightarrow b_1=0.$$
 Note that $\tilde x, \tilde y $ are independent modulo $A_2^2 \Longrightarrow b_2\neq 0.$
We have further
$$ a_1b_4y^2x+... \Longrightarrow b_4=0, 2a_6b_2 y^4+... \Longrightarrow a_6=0.$$

Rewrite $\tilde x, \tilde y$ substituting coefficients we found:
$$\tilde x= a_1x+a_3    yx +a_5 y^2x +a_7 y^4 +a_8 y^5$$

$$\tilde y= b_2 y+b_3    yx +b_5 y^2x +b_6  y^3 +b_7 y^4 +b_8 y^5.$$

We have also $\tilde x^2+\tilde y^3+\tilde y^4=0,$
$$\tilde x^2=a_1^2+a_1a_3(xyx+yx^2)+a_3^2yxyx +a_1a_5(xy^2x+y^2x^2)$$

$$\tilde y^3=b_2^2 y^3 +3 b_2b_6 y^5, \tilde y^4=b^4y^4 \Longrightarrow b_2=0.$$ This is the contradiction with what we noticed before: $b_2 \neq 0.$
Thus the isomorphism does not exist.
\end{proof}

\section{ Associated graded structures of filtered braces  }\label{br}

Let $B$ be the brace with filtration $B=\bigcup B_i, \, B_{i+1} \subset B_{i}, \, B_i * B_j \subset B_{i+j}$, we also suppose $\bigcap\limits_{i=1}^{\infty} B_i = 0$. As usual, we say that element $x$  has degree $i$ with respect to this filtration if $x \in B_i, x\notin B_{i-1}$.
Nilpotent braces are naturally endowed with such filtration.

We will later consider   also a completion $\widehat B$ of $B$ with respect to a topology, defined by the given filtration.
  We can think of the completion  $\widehat B$ as a set consisting of infinite series $\sum\limits_{i=1}^{\infty} r_i, r_i \in B_i$.
For details of the completion construction  see, for example, \cite{Baht} chapter 8. There is a natural filtration on $\widehat B$: $\widehat B = \bigcup \widehat B_i$, $\widehat B_{i+1} \subset \widehat B_{i}, \, \widehat B_i * \widehat B_j \subset \widehat B_{i+j}$, $\bigcap\limits_{i=1}^{\infty} \widehat B_i = 0$, where $\widehat B_i$ consists of infinite series of degree $i$. We  say here that degree of series $\sum\limits_{i=1}^{\infty}r_i$ is $i$ if $r_i\neq 0, r_m=0$ for $m <i$.

With respect to the filtration on $B$ we can consider associated graded structure $B_{gr}= \oplus B_i/B_{i+1} =\oplus \bar B_i$ with multiplication defined in the following way: for $a_i \in B_i, b_j \in B_j$
$$a_i * b_j=a_i * b_j+B_{i+j+1}$$
Then we extend it to arbitrary (non-homogeneous) elements using left and right distributivity, which holds in $B_{gr}$, because of the nature of the left brace identity. Indeed,

$$(a+b+a*b)*c=a*c+b*c+a*(b*c)$$ for $a\in B_i, b\in B_j, c\in B_k$ means $(a+b)*c=a*c+b*c+B_r, r={\rm max}(i,j)+k+1$.
Thus in $B_{gr}$ we have
right distributivity
$$(a+b)*c=a*c+b*c$$
for $a\in \bar B_i, b\in \bar B_j, c\in \bar B_k$.
The left distributivity  holds in $B$ itself, so trivially in $B_{gr}$ as well.
Now using two-sided distributivity in $B_{gr}$ we can correctly define multiplication in $B_{gr}$ by extending it from homogeneous elements:
for arbitrary $a,b \in B_{gr}$, $a=a_1+...+a_n, b=b_1+...+b_m, a_i\in \bar B_i, b_i \in \bar B_i$
$$(a*b)_k=\sum\limits_{i=1}^{k-1} a_i * b_{k-i}+B_{k+1}.$$

So, this construction correctly defines multiplication since the brace  axiom $(a+b+a*b)*c=a*c+b*c+a*(b*c)$ supplied us with the right distributivity in the associated graded $B_{gr}.$

\begin{proposition}
Let (B,+,*) be a set  with two operations endowed with decreasing filtration:
 $B=\bigcup B_i, \, B_{i+1} \subset B_{i}, \, B_i*B_j \subset B_{i+j}$. Consider associated graded space
$B_{gr}= \oplus B_i/B_{i+1} =\oplus \bar B_i$. If for
$a\in B_i, b\in B_j, c\in B_k$, $(a+b)*c=a*c+b*c+B_r, \,{\rm and}\,\, a*(b+c)=a*b+a*c+B_r, r={\rm max}(i,j)+k+1 $, then
 $B_{gr}$ with multiplication extended from homogeneous components:
 $a,b \in B_{gr}$, $a=a_1+...+a_n, b=b_1+...+b_m, a_i\in \bar B_i, b_i \in \bar B_i$
$$(a*b)_k=\sum\limits_{i=1}^{k-1} a_i * b_{k-i}+B_{k+1}$$
 is correctly defined.
\end{proposition}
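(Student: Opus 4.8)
The plan is to reduce the assertion that the product is \emph{correctly defined} to one well-definedness statement on homogeneous components; once that is in place, the extension formula is automatically unambiguous. So the first and only substantive step is to check that the rule $\bar a_i * \bar b_j := (a_i * b_j) + B_{i+j+1}$ does not depend on the chosen representatives $a_i \in B_i$ of $\bar a_i \in \bar B_i$ and $b_j \in B_j$ of $\bar b_j \in \bar B_j$, i.e. that it is a well-defined map $\bar B_i \times \bar B_j \to \bar B_{i+j}$. (We use here that $(B,+)$ is an abelian group, so that differences are available and each $\bar B_i = B_i/B_{i+1}$ is a group; in the brace case this is part of the axioms.)

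To prove this I would replace representatives one variable at a time. Suppose $a_i' = a_i + d$ with $d \in B_{i+1}$ and $b_j' = b_j + e$ with $e \in B_{j+1}$. Applying the filtered right-distributivity hypothesis to $a_i \in B_i$, $d \in B_{i+1}$, $b_j \in B_j$ gives
\[
a_i' * b_j = (a_i + d) * b_j = a_i * b_j + d * b_j + B_{i+j+2},
\]
where the error lies in $B_{\max(i,i+1)+j+1} = B_{i+j+2} \subseteq B_{i+j+1}$; since also $d * b_j \in B_{i+1} * B_j \subseteq B_{i+j+1}$ by the filtration property, we get $a_i' * b_j \equiv a_i * b_j \pmod{B_{i+j+1}}$. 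The filtered left-distributivity hypothesis applied to $a_i' \in B_i$, $b_j \in B_j$, $e \in B_{j+1}$ gives in exactly the same way $a_i' * b_j' = a_i' * (b_j + e) = a_i' * b_j + a_i' * e + (\text{something in } B_{i+j+1})$ with $a_i' * e \in B_i * B_{j+1} \subseteq B_{i+j+1}$, hence $a_i' * b_j' \equiv a_i' * b_j \pmod{B_{i+j+1}}$. Chaining the two congruences yields $a_i' * b_j' \equiv a_i * b_j \pmod{B_{i+j+1}}$, which is precisely what is needed. (The same computation with $d,e$ arbitrary in $B_i, B_j$ shows the homogeneous product is biadditive in each variable.)

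The second step is purely formal. Since $B_{gr} = \bigoplus_{n \geq 1} \bar B_n$, every element is uniquely a finite sum of its homogeneous components, so for $a = a_1 + \dots + a_n$ and $b = b_1 + \dots + b_m$ the quantity $(a * b)_k := \sum_{i=1}^{k-1} \bar a_i * \bar b_{k-i} \in \bar B_k$ is unambiguous (each summand being the well-defined homogeneous product of the first step), it vanishes for $k > n+m$, and therefore $a * b := \sum_k (a * b)_k$ is a well-defined element of $B_{gr}$. In fact this is nothing but the unique biadditive extension of the homogeneous product — equivalently, the product forced on $B_{gr}$ by two-sided distributivity — and the content of the proposition is exactly that the two filtered distributivity hypotheses make such an extension consistent even though $*$ need not be distributive on $B$ itself.

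The only place where anything can go wrong is the degree bookkeeping in the second paragraph: one must make sure that after substituting a representative the correction term gains at least one unit of filtration degree, so that $d * b_j$, $a_i' * e$, and the errors produced by the hypotheses all land in $B_{i+j+1}$ rather than merely in $B_{i+j}$. This is guaranteed by the hypotheses being stated with a ``$+1$'' in the index and by $B_i * B_j \subseteq B_{i+j}$, so there is no deeper obstruction; the proof is a short verification.
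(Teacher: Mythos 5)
Your proof is correct and follows essentially the same route as the paper: two-sided distributivity up to higher filtration degree is exactly what makes the extension from homogeneous components consistent. The paper leaves the representative-independence check implicit in the discussion preceding the proposition, while you spell it out (with the correct degree bookkeeping, using $B_{i+1}*B_j,\ B_i*B_{j+1}\subseteq B_{i+j+1}$ and $r\geq i+j+1$), so nothing is missing.
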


We can analogously associate  a graded structure for the completion, which by definition is the direct product of quotients of elements of filtration of $B$ (which are isomorphic to quotients of elements of filtration of $\widehat B$).
$$ \widehat B_{gr}= \prod_{i=1}^{\infty} B_i/B_{i+1} (=\prod_{i=1}^{\infty} \widehat B_i/ \widehat B_{i+1}).$$
Since $\bigcap B_i = 0$ we have $B \subset \widehat B$ is a subbrace of the completion and $B_{gr}$ is obviously  a subbrace of $\widehat B_{gr}.$

\begin{theorem}\label{gr} Let $B$ be a left brace endowed  with descending filtration $B=\bigcup B_i$ where $B_i \triangleleft  B$, $ B_{i+1} \subset B_{i}, \, B_i * B_j \subset B_{i+j}$, satisfying  $\bigcap\limits_{i=1}^{\infty} B_i =0$, then the associated graded
$B_{gr}= \oplus B_i/B_{i+1} =\oplus \bar B_i$ with multiplication defined above is a pre-Lie algebra.
\end{theorem}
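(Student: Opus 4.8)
The plan is to deduce the left-symmetry of the associator directly from the single brace axiom $(a\circ b)*c=a*c+b*c+a*(b*c)$, read modulo one degree above the degree of the associator. Since the multiplication on $B_{gr}$ is biadditive — left distributivity descends from $B$, and right distributivity on $B_{gr}$ was established above out of the brace identity — the associator $(u,v,w)=(u*v)*w-u*(v*w)$ is additive in each slot on $B_{gr}$, so it suffices to check $(a*b)*c-a*(b*c)=(b*a)*c-b*(a*c)$ for homogeneous $a\in\bar B_i$, $b\in\bar B_j$, $c\in\bar B_k$; both sides then lie in $\bar B_{i+j+k}$. I fix lifts of $a,b,c$ to $B$, still written $a,b,c$, and recall that $B=B_1$, so every nonzero element of $B$ has filtration-degree $\ge1$.

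First I would rewrite, inside the abelian group $(B,+)$, the element $a\circ b=a*b+a+b$ as $(a+b)+(a*b)$, where $a+b$ has degree $\ge\min(i,j)\ge1$ and $a*b\in B_{i+j}$. Applying the right-distributivity formula of Lemma~\ref{di} in $\widehat B$ to the sum $(a+b)+(a*b)$,
$$(a\circ b)*c=\bigl((a+b)+(a*b)\bigr)*c=(a+b)*c+(a*b)*c+R_1 ,$$
where, crucially, every term of the correction $R_1$ has degree $\ge\deg(a+b)+\deg(a*b)+\deg c\ge 1+(i+j)+k$. Setting $R_2:=(a+b)*c-a*c-b*c$ and substituting into the brace axiom $(a\circ b)*c=a*c+b*c+a*(b*c)$, then cancelling $a*c+b*c$, I obtain
$$(a*b)*c-a*(b*c)=-R_2-R_1 .$$
Running the same computation with $a$ and $b$ interchanged gives
$$(b*a)*c-b*(a*c)=-R_2'-R_1',\qquad R_2':=(b+a)*c-b*c-a*c ,$$
with $R_1'$ again having all its terms in degree $\ge1+(i+j)+k$.

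The last step is the comparison of the two right-hand sides in $\bar B_{i+j+k}$. Because $(B,+)$ is abelian, $a+b=b+a$ and $a*c+b*c=b*c+a*c$, hence $R_2'=R_2$ identically in $B$: the only potentially large correction is automatically symmetric under $a\leftrightarrow b$ and drops out of the difference. Since $R_1$ and $R_1'$ both lie entirely in $B_{\ge i+j+k+1}$, they vanish in the degree-$(i+j+k)$ component, and therefore
$$(a*b)*c-a*(b*c)\equiv(b*a)*c-b*(a*c)\pmod{B_{i+j+k+1}},$$
i.e.\ $(a,b,c)=(b,a,c)$ in $\bar B_{i+j+k}$; biadditivity then propagates the identity to all of $B_{gr}$. (In the $\k$-brace case $*$ is moreover $\k$-bilinear on $B_{gr}$, so one gets a pre-Lie algebra over $\k$.)

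I expect the genuine content, and the main obstacle, to be exactly the input from Lemma~\ref{di}: one needs the right-distributivity formula in the completion $\widehat B$ together with the sharp degree bound on its correction terms, namely that every term of the defect of $(u+v)*w$ sits in degree $\ge\deg u+\deg v+\deg w$. This is what simultaneously renders $R_1,R_1'$ invisible in the relevant graded piece and isolates $R_2$ as the unique surviving correction, which the commutativity of $(B,+)$ makes symmetric. A routine but necessary secondary point — already supplied by the Proposition preceding the theorem — is that the multiplication assembled on $B_{gr}$ from homogeneous components via the two distributivity laws is well defined and independent of the chosen lifts, so that the displayed computation is legitimately a statement about $B_{gr}$.
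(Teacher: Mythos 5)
Your proposal is correct and follows essentially the same route as the paper: both arguments feed the brace axiom for $(a+b+a*b)*c$ and its $a\leftrightarrow b$ swap into the right-distributivity formula of Lemma~\ref{di} (used only to split off the $a*b$ term, with the correction pushed into degree $>\deg(a*b)+\deg c$), then cancel the $(a+b)*c$ terms exactly by commutativity of $+$ so that only high-degree corrections remain, giving left-symmetry in each graded component. Your explicit bookkeeping with $R_1,R_1',R_2=R_2'$ and the reduction to homogeneous elements is just a more detailed write-up of the paper's Lemma~\ref{2} combined with the corollary to Lemma~\ref{di}.
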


For the proof of the theorem we are going first to extend  the right distributivity formula in lemma 15 proved in \cite{Engel} for the nilpotent case to the case of an arbitrary descending ideal filtration with zero intersection. To write down the formula we should extend our realm to the completion  $\widehat B$ of $B$, with respect a given filtration.

\begin{lemma}\label{di} Let $B$ be a left brace endowed with the descending filtration  $B=\bigcup B_i$ where $B_i$ are ideals in $B$, $ B_{i+1} \subset B_{i}, \, B_i * B_j \subset B_{i+j}$, satisfying  $\bigcap\limits_{i=1}^{\infty} B_i =0$. Then the right distributivity formula holds in $\widehat B$:
$$(a+b)*c=a*c+b*c+ \sum\limits_{i=1}^{\infty}(-1)^{i+1} (d_i*d_i')*c -d_i*(d_i'*c).$$
 here $a,b,c \in B$, $d_0=a, d_0'=b$ and for $i \geq 1$ $d_i$th are defined as $d_i=d_{i-1}+d_{i-1}'.$
\end{lemma}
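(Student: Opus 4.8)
The plan is to prove the formula by an iterative "peeling" argument in the completion $\widehat B$, exploiting the brace axiom in the form $(u*v+u+v)*c = u*c + v*c + u*(v*c)$, i.e. $(u\circ v)*c = u*c+v*c+u*(v*c)$, where $u\circ v = u+v+u*v$. First I would rewrite the brace identity so that it expresses $(u+v)*c$ in terms of $u*c$, $v*c$ and a correction term: since $u+v = u\circ v - u*v$ (as group/abelian-group elements, with $u*v\in B_{i+j}$ when $u\in B_i$, $v\in B_j$), one obtains
$$(u+v)*c = u*c + v*c + u*(v*c) - (u*v)*c + \bigl(\text{terms of higher degree}\bigr),$$
and more precisely, applying the axiom to $u' = u+v$ and $v' = -u*v$ together with additivity of $w\mapsto w$ in the first slot at the cost of further corrections, one gets the exact single-step identity
$$(u+v)*c = u*c+v*c + (u*v)*c - u*(v*c) \quad\text{rewritten as}\quad (d_0+d_0')*c = d_0*c+d_0'*c - \bigl((d_0*d_0')*c - d_0*(d_0'*c)\bigr).$$
(One must be careful here with signs: the displayed formula in the Lemma has $\sum (-1)^{i+1}\bigl((d_i*d_i')*c - d_i*(d_i'*c)\bigr)$, so the $i=1$ term carries a $+$; I would match signs by writing the brace axiom carefully in terms of $+$ rather than $\circ$, which is exactly the computation that produces $d_1 = d_0+d_0'$ as the next pair.)

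The key recursive step is: having expressed $(d_0+d_0')*c$ up to the correction $(d_0*d_0')*c - d_0*(d_0'*c)$, I notice that $(d_0*d_0')*c$ is not yet of the desired form because $d_0*d_0'$ itself need not distribute trivially; but the \emph{only} offending piece is $(d_1*d_1')*c$ where $d_1 = d_0 + d_0'$, $d_1' = $ (the new pair). I would set up the induction so that at stage $i$ one has
$$(a+b)*c = a*c+b*c + \sum_{j=1}^{i}(-1)^{j+1}\bigl((d_j*d_j')*c - d_j*(d_j'*c)\bigr) + (-1)^{i}\,\rho_i,$$
where $\rho_i$ is a remainder expressible through $d_{i}, d_i'$, and I would check that $\rho_i$ lies in $B_{N_i}$ with $N_i\to\infty$. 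The degree bookkeeping is the heart of it: because $B_i*B_j\subset B_{i+j}$ and the filtration is \emph{ideal} (so that $d_j, d_j'$ have strictly increasing degrees as $j$ grows — this is where $B_i\triangleleft B$ and $\bigcap B_i = 0$ get used, guaranteeing the series $\sum d_j$ converges in $\widehat B$ and the remainder $\rho_i\to 0$), the infinite sum on the right converges in the filtration topology of $\widehat B$, and the remainder vanishes in the limit. Passing to the limit $i\to\infty$ gives the stated identity in $\widehat B$.

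The main obstacle I expect is twofold. First, getting the \emph{exact} form of the one-step identity with the correct signs: the naive manipulation using $u\circ v$ introduces the extra term $u*v$ in the first argument, and one has to feed that back through the axiom once more, which is what generates the alternating sign and the recursion $d_{i}=d_{i-1}+d_{i-1}'$ rather than something messier; I would do this computation explicitly once and then assert the pattern. Second, justifying convergence: I must verify that $d_i = d_{i-1}+d_{i-1}'$ and the associated $d_i'$ genuinely have degrees tending to infinity (a priori $d_i$ could stay in low degree since addition does not raise degree), so I would track that $d_i'$ — which is built from a product $*$ — gains degree at each step, forcing $d_i' \in B_{k_i}$ with $k_i\to\infty$, and hence each summand $(d_i*d_i')*c - d_i*(d_i'*c)$ lies in ever higher $B_m$; the ideal hypothesis on the filtration and $\bigcap B_i = 0$ are exactly what make this rigorous and make the completion $\widehat B$ the natural place for the identity to live. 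Once convergence is in hand, the rest is a routine induction, so I would not belabour it.
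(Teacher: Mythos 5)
Your route is genuinely different from the paper's: the paper makes no computation at all, but passes to the quotient braces $B/B_n$ (this is exactly where $B_n\triangleleft B$ is used), applies Smoktunowicz's Lemma 15 in the nilpotent brace $B/B_n$, and then concludes from $\bigcap B_i=0$ that the congruences mod $B_n$, valid for every $n$, assemble into the identity in $\widehat B$. You propose instead to re-derive the formula from scratch by iterating the brace axiom inside $\widehat B$. That can be made to work, but as written the core step is incorrect and the real content of the lemma is deferred. The ``exact single-step identity'' you display, $(u+v)*c=u*c+v*c+(u*v)*c-u*(v*c)$, is not an identity in a left brace; indeed your two displayed versions of it already disagree in sign, and neither is exact. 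The only exact relation available is the axiom itself, $(u+v+u*v)*c=u*c+v*c+u*(v*c)$: the unwanted summand $u*v$ in the left slot cannot be eliminated in one step, and eliminating it recursively is precisely what generates the infinite alternating series with $d_i=d_{i-1}+d_{i-1}'$ and $d_i'=d_{i-1}*d_{i-1}'$ (a definition neither you nor the statement spells out). You acknowledge this and then say you would ``do this computation explicitly once and then assert the pattern''; since that computation, with its sign bookkeeping, \emph{is} the proof, deferring it is a genuine gap rather than a routine omission.

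To close the gap: the axiom applied to the pair $(d_i,d_i')$ gives the exact relations $d_{i+2}*c=(d_i+d_i'+d_i*d_i')*c=d_i*c+d_i'*c+d_i*(d_i'*c)$, and telescoping these expresses $(a+b)*c-a*c-b*c$ as an alternating sum of the brackets $(d_i*d_i')*c-d_i*(d_i'*c)$ plus remainders of two kinds: terms $d_k*(d_k'*c)$ and terms of the form $(d_k+d_k')*c-d_k*c$. The first kind is small because $\deg d_k'\to\infty$, which follows from $B_i*B_j\subset B_{i+j}$ alone; but the second kind is small only because each $B_m$ is an \emph{ideal}, so that congruence mod $B_m$ is compatible with $*$ and adding $d_k'\in B_m$ to the left factor changes the product only within $B_m$. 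Your convergence discussion invokes the ideal hypothesis for the wrong purpose (to make the degrees of $d_j$ grow, which they do not, as you yourself note later); without the congruence argument just described the remainder estimate, and hence the passage to the limit in $\widehat B$, is unjustified. Finally, carrying out the telescoping shows the leading correction is $a*(b*c)-(a*b)*c$, i.e.\ the bracket attached to the pair $(d_0,d_0')=(a,b)$ enters with a minus sign — consistent with your first, approximate display and not with your ``exact'' one — so the sign/indexing issue you flagged is real and has to be settled by the computation itself, not asserted.
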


\begin{proof}
This is a direct consequence of the lemma 15 proved in \cite{Engel} in the nilpotent case. Indeed, the equality we need to prove should hold in infinite series, and series coincide means they coincide componentwise (here zero intersection of the filtration is important). For any $n$ we consider  now a nilpotent algebra $B/B_n$ and apply to it the lemma, this is possible since filtration components $B_n$ are ideals in $B$. We get a formula in $B/B_n$, which means the series are coincide up to $n$th term. Since it is true for any $n$ this proves the statement.
\end{proof}

Now we notice one consequence of the formula from previous lemma, which holds in infinite series (completion) corresponding to the filtration.

\begin{corollary} In the setting of previous lemma for arbitrary infinite series  from the completion corresponding to given filtration $a,b,c  \in B, \,\, u_r \in \widehat B$. If $\deg a < \deg b$, then
$$(a+b)*c = a*c+b*c+u_r,$$
where $\deg u_r  > \deg b+ \deg c$.
  \end{corollary}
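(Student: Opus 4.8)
The plan is to deduce this corollary directly from Lemma~\ref{di}, tracking degrees of the correction terms. Fix $a,b,c \in B$ with $\deg a < \deg b$, and set $j = \deg b$, $k = \deg c$. Write $d_0 = a$, $d_0' = b$, and $d_i = d_{i-1} + d_{i-1}'$ for $i \geq 1$, as in the lemma. The first observation is that every $d_i$ with $i \geq 1$ is a sum $d_{i-1} + d_{i-1}'$, and by an easy induction $\deg d_i \geq \min(\deg a, \deg b) = \deg a$, while each $d_i'$ with $i \geq 0$ equals $d_{i-1} + d_{i-1}' + d_{i-1} * d_{i-1}'$ (the $*$-version of the $+$; here one must recall the precise recursive definition of the $d_i'$ from \cite{Engel}, which I take as given) — in any case what matters is that both $d_i$ and $d_i'$ for $i \geq 1$ lie in $B_{\deg a}$, hence so do the products $d_i * d_i'$.

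The heart of the matter is to check that for each $i \geq 1$ the term $(d_i * d_i') * c - d_i * (d_i' * c)$ lands in $\widehat B_r$ with $r > j + k$. Here I would use that $d_1 = d_0 + d_0' = a + b$ and that, by the filtration hypothesis $B_p * B_q \subseteq B_{p+q}$, one has $d_1 * d_1' \in B_{\geq 2\deg a + 1}$ or better; more carefully, since the associator term $(a,b,c)$-type expression $(d_i * d_i') * c - d_i * (d_i' * c)$ is exactly the failure of associativity, and all the $d_i, d_i'$ for $i \geq 1$ have degree strictly larger than the naive bound once one notes $d_1' \in B_{\deg a + \deg b}$ — this is the step where the strict inequality $\deg a < \deg b$ is consumed, since it forces $d_1 = a + b$ to have degree $\deg a$ but the $*$-corrections to have degree at least $\deg a + \deg b > 2\deg a$. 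Propagating this through the recursion shows each summand in the infinite sum of Lemma~\ref{di} has degree at least $\deg a + \deg b + \deg c + 1 > j + k$ (in fact one gets $\geq \deg a + \deg b + \deg c$, and strictness follows from one extra $+1$ in the filtration bound). Summing, $u_r := \sum_{i \geq 1} (-1)^{i+1}\big((d_i * d_i') * c - d_i * (d_i' * c)\big)$ converges in $\widehat B$ and has $\deg u_r > \deg b + \deg c$.

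The main obstacle I anticipate is purely bookkeeping: pinning down the exact degree of each $d_i$ and $d_i'$ from the recursion in \cite{Engel} and verifying that the strict hypothesis $\deg a < \deg b$ (rather than just $\deg a \le \deg b$) is what upgrades the bound from $\geq \deg b + \deg c$ to the strict $> \deg b + \deg c$. Concretely, I expect that $\deg(d_i * d_i')$ is at least $\deg a + \deg b$ for all $i \ge 1$ — the lowest-degree contribution $a * b$ already sits in $B_{\deg a + \deg b}$ — and then applying $(-) * c$ and subtracting the associated term keeps us in degree $\ge \deg a + \deg b + \deg c > \deg b + \deg c$, using $\deg a \ge 1$. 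Once the degree estimate on a single summand is secured, convergence of the series and the final bound on $u_r$ are immediate from the completeness of $\widehat B$ and the fact that $\widehat B_r$ is closed under the (convergent) infinite sums in the filtration topology.
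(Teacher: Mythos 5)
Your argument is correct and is essentially the paper's own proof, which simply reads the bound off the formula of Lemma~\ref{di}: every summand $(d_i*d_i')*c-d_i*(d_i'*c)$ lies in filtration degree at least $\deg d_i+\deg d_i'+\deg c\ \ge\ \deg a+\deg b+\deg c\ >\ \deg b+\deg c$ (only $\deg a\ge 1$ is needed for strictness), and these degrees grow with $i$, so the series converges in $\widehat B$ and its sum $u_r$ has the stated degree. Two small caveats: the recursion you first guess, $d_i'=d_{i-1}+d_{i-1}'+d_{i-1}*d_{i-1}'$, would actually ruin the estimate (it yields only degree $\ge 2\deg a+\deg c$, which need not exceed $\deg b+\deg c$), whereas the bound you in fact use later, $d_1'\in B_{\deg a+\deg b}$, comes from the correct reading $d_i'=d_{i-1}*d_{i-1}'$ of \cite{Engel}; and the hypothesis $\deg a<\deg b$ is not what is ``consumed'' in the inequality --- it merely identifies $\deg b+\deg c$ as the relevant threshold for the subsequent application in Lemma~\ref{2}.
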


\begin{proof} This can be seen directly from the formula in Lemma~\ref{di} above.
\end{proof}

\begin{lemma}\label{2} If for $a,b,c \in B, u_r \in \widehat B$ the property
$$(a+b)*c = a*c+b*c+u_r,$$
where $\deg u_r  > \deg b+ \deg c$ holds, then $B_{gr}$ is a pre-Lie algebra.
\end{lemma}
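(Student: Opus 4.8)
The plan is to check directly the defining identity of a pre-Lie algebra for the operation $*$ on $B_{gr}$, namely the left symmetry of the associator $(a,b,c)=(b,a,c)$ with $(a,b,c)=(a*b)*c-a*(b*c)$. Because $*$ is additive in each argument on $B_{gr}$ (the two-sided distributivity established above), the associator is additive in each argument, so it is enough to verify the identity for homogeneous $a\in\bar B_i$, $b\in\bar B_j$, $c\in\bar B_k$. If $c=0$ everything vanishes, and if $i=j$ with $a+b=0$ in $\bar B_i$ the identity reads $-(a,a,c)=-(a,a,c)$ and is trivial, so we may assume $c\ne 0$ and, when $i=j$, also $a+b\ne 0$ in $\bar B_i$.

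First I would choose representatives $\tilde a\in B_i$, $\tilde b\in B_j$, $\tilde c\in B_k$ of exact degrees $i,j,k$ (possible by the assumptions just made). By the definition of the multiplication on $B_{gr}$, the element $(a*b)*c$ is the class of $(\tilde a*\tilde b)*\tilde c$ in $\bar B_{i+j+k}$ and $a*(b*c)$ is the class of $\tilde a*(\tilde b*\tilde c)$ there; hence the associator $(a,b,c)$ in $B_{gr}$ equals the class of $(\tilde a*\tilde b)*\tilde c-\tilde a*(\tilde b*\tilde c)$ modulo $B_{i+j+k+1}$.

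The core step is to rewrite this difference using the brace identity. Applying it to $\tilde a,\tilde b,\tilde c$ gives
$$(\tilde a*\tilde b+\tilde a+\tilde b)*\tilde c=\tilde a*\tilde c+\tilde b*\tilde c+\tilde a*(\tilde b*\tilde c).$$
Now $\tilde a*\tilde b\in B_{i+j}$ has degree $\ge i+j$, strictly larger than $\deg(\tilde a+\tilde b)=\min(i,j)$ (here $i,j\ge 1$), so the hypothesis of the lemma — with $\tilde a+\tilde b$ and $\tilde a*\tilde b$ as the two summands, in increasing order of degree, and $\tilde c$ as the right factor — gives
$$(\tilde a*\tilde b+\tilde a+\tilde b)*\tilde c=(\tilde a+\tilde b)*\tilde c+(\tilde a*\tilde b)*\tilde c+w,\qquad \deg w>\deg(\tilde a*\tilde b)+k\ge i+j+k.$$
Equating the two right-hand sides and rearranging yields
$$(\tilde a*\tilde b)*\tilde c-\tilde a*(\tilde b*\tilde c)=-\,\Delta-w,\qquad \Delta:=(\tilde a+\tilde b)*\tilde c-\tilde a*\tilde c-\tilde b*\tilde c.$$
Since the left-hand side lies in $B_{i+j+k}$ and $w\in B_{i+j+k+1}$, this identity also shows $\Delta\in B_{i+j+k}$, so $\Delta$ has a well-defined class in $\bar B_{i+j+k}$, and in $B_{gr}$ we get $(a,b,c)=-[\Delta]$. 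Finally $\Delta$ is literally unchanged under interchanging $\tilde a$ and $\tilde b$, because $\tilde a+\tilde b=\tilde b+\tilde a$ and the addition is commutative; therefore $(a,b,c)=-[\Delta]=(b,a,c)$ in $B_{gr}$, which is exactly the pre-Lie identity, and one concludes $B_{gr}$ is pre-Lie.

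I expect the only real difficulty to be the degree bookkeeping, not the algebra itself: one must make sure that the single correction term $w$ produced by the merely approximate right-distributivity lies in $B_{i+j+k+1}$ (hence is invisible in $\bar B_{i+j+k}$), that the analogous corrections implicit in the well-definedness of the $B_{gr}$-product behave the same way, and — the point where one is most tempted to be sloppy — that $\Delta$, although a priori it may carry contributions in degrees below $i+j+k$, genuinely has a well-defined top component in degree $i+j+k$; this is precisely what the identity $(\tilde a*\tilde b)*\tilde c-\tilde a*(\tilde b*\tilde c)=-\Delta-w$ supplies. The degenerate case $i=j$ with cancellation in $\tilde a+\tilde b$ is absorbed by the preliminary reduction, or alternatively one may replace the use of the hypothesis by the exact expansion of Lemma~\ref{di}.
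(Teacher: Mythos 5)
Your proof is correct and is essentially the paper's own argument: both apply the brace axiom to $(a+b+a*b)*c$, use the approximate right-distributivity hypothesis to split off $(a*b)*c$ with an error of degree $>\deg(a*b)+\deg c$, and then exploit the symmetry of $(a+b)*c-a*c-b*c$ under swapping $a$ and $b$ (the paper phrases this as subtracting the two permuted identities, you phrase it as $(a,b,c)=-[\Delta]$ with $\Delta$ symmetric). Your additional bookkeeping — reduction to homogeneous classes, choice of representatives, checking $\Delta\in B_{i+j+k}$, and the degenerate case $i=j$ with cancellation — only makes explicit details the paper leaves implicit.
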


\begin{proof} According to the brace axiom  we have
$$(a+b+a*b)*c = a*c+b*c+a*(b*c),$$
and if we permute $a$ and $b$ we get
$$(b+a+b*a)*c = b*c+a*c+b*(a*c).$$

Now due to the property from the corollary we can present the left hand side as
$$ ((a+b)+a*b)*c=(a+b)*c +(a*b)*c +u_r$$
with $\deg u_r > \deg a*b + \deg c$
and
$$ ((b+a)+b*a)*c=(b+a)*c +(b*a)*c +v_r$$
with $\deg v_r > \deg b*a + \deg c.$
Hence
$$(a+b)*c +(a*b)*c +u_r=a*c+b*c+a*(b*c)$$
and
$$(b+a)*c +(b*a)*c +v_r=b*c+a*c+b*(a*c).$$
Subtracting these two we get
$$(a*b)*c-a*(b*c)=(b*a)c-b*(a*c)+u_r+v_r,$$
hence in $B_{gr}$ we have left-symmetric identity.

\end{proof}

\begin{proof} The combination of the corollary and Lemma~\ref{2} completes the proof of the theorem.
\end{proof}

\section{Associated graded structures to trusses and pre-Lie algebras}\label{tr}

The notion which incorporates the notion of a ring and of a brace was introduced by T. Brzezinski \cite{Brz} and called truss (because the defining law is holding both structures together).

\begin{definition} A set $(A, \circ, +) $ with two binary operations is called a truss, if $(A,+)$ is an abelian group,  $(A,\circ)$ is a semigroup and
$$a\circ(b+c)+a=a\circ b+a\circ c +\alpha(a)$$
where $\alpha$ is some function $\alpha: A \to A.$
\end{definition}

If we rewrite this definition in terms of operation $a *b =A\circ b -a-b$, from the axiom of associativity in $(A,\circ)$ we get the same axiom as in brace:
$$(a*b+a+b)*c=a*c+b*c+a*(b*c)$$
and from the above axiom we get
$$a*(b+c)=a*b+a*c+\alpha(a).$$

We will use axioms of truss in this form.
Let now $B$ be a truss endowed with descending ideal filtration with zero intersection. Under mild condition on $\alpha$, we can consider associated graded to this filtration, and show that it carries a structure of a pre-Lie algebra.

\begin{corollary}\label{Ctr} Let $B$ be a truss endowed with
 descending filtration $B=\bigcup B_i$ where $B_i \triangleleft  B$, $ B_{i+1} \subset B_{i}, \, B_i * B_j \subset B_{i+j}$, satisfying  $\bigcap\limits_{i=1}^{\infty} B_i =0$,  and such that $\deg \alpha(a) > 2$. Then the associated graded structure
$B_{gr}= \oplus B_i/B_{i+1} =\oplus \bar B_i$  is a pre-Lie algebra.
\end{corollary}

\begin{proof}
The same argument as in previous section shows that the associated graded $B_{gr}$ is well defined, since the correcting term $\alpha$ have big enough degree:  $\deg \alpha(a) > 2$. Then based on the first axiom of the truss (in terms of $*$), which do coincide with the axiom of brace, we used in the proofs in previous section, we can  see literally in the same way as for braces, that the associated graded algebra is a pre-Lie algebra.
\end{proof}




e-mail address: iyudu@mpim.mpg-de; n.iyudu@ihes.fr; n.joudu@yahoo.de

\end{document}